\documentclass[11pt,a4paper,twosides]{amsart}
\usepackage{amssymb,amsmath,amsthm,mathrsfs,graphicx,xcolor}
\allowdisplaybreaks
\theoremstyle{plain}

\newcommand{\rre}{\mathbb{R}}
\newcommand{\pt}{\partial}

\numberwithin{equation}{section}

\newtheorem{thm}{Theorem}[section]

\newtheorem{lem}[thm]{Lemma}
\newtheorem{prop}[thm]{Proposition}

\theoremstyle{remark}
\newtheorem{rem}{Remark}

\makeatletter
\@addtoreset{equation}{section}
\makeatother

\title[Zakharov-Kuznetsov equation in 3D] 
{Scattering problem for Zakharov-Kuznetsov\\
equation in three space dimensions}

\author{Jun-ichi Segata}

\address{Faculty of Mathematics, Kyushu University, 
Fukuoka, 819-0395, Japan}
\email{segata@math.kyushu-u.ac.jp}

\keywords{Zakharov-Kuznetsov equation, scattering}
\subjclass[2010]{Primary 35Q53, Secondary 35P25}

\begin{document}

\maketitle



\begin{abstract}
This paper is a continuation of our previous study \cite{S} 
on the scattering problem for the 
Zakharov-Kuznetsov equation (ZK). 
When the space dimension is three, we construct a 
global solution to (ZK) which scatters to a given free solution 
without smallness assumption on the asymptotic states. 

%

\end{abstract}

%
%
\section{Introduction}

This paper is a continuation of our previous study \cite{S} 
on the scattering problem for the Zakharov-Kuznetsov equation. 
In this paper we focus on the Zakharov-Kuznetsov equation 
in three dimensions:
\begin{equation} 
\pt_tu+\pt_{x_1}\Delta u=\pt_{x_1}(u^2), \qquad(t,x)\in\rre\times\rre^3,
\label{ZK} 
\end{equation} 
where
$u:\rre\times\rre^3\to\mathbb{R}$ is an unknown
function, $x=(x_1,x_2,x_3)$ and $\Delta$ is Laplacian on $\rre^3$. 
Equation (\ref{ZK}) was 
derived by Zakharov-Kuznetsov \cite{ZK} to describe 
unidirectional wave propagation in a magnetized plasma. 
Note that Laedke-Spatschek \cite{LSp} derived (\ref{ZK}) 
from the basic hydrodynamical equations. Furthermore, 
Lannes-Linares-Saut \cite{LLS} gave the rigorous justification 
of (\ref{ZK}) from the Euler-Poisson system for a uniformly 
magnetized media. 

Equation (\ref{ZK}) has the conservation of mass : for any $t\in\rre$, 
\begin{equation} \label{mass}
M[u](t):=\frac12
\int_{\rre^3}u(t,x)^2dx=M[u](0),
\end{equation}
and the conservation of energy : for any $t\in\rre$, 
\begin{eqnarray} 
\ \ \ E[u](t):=\frac12
\int_{\rre^3}|(\nabla u)(t,x)|^2dx
-\frac16\int_{\rre^3}u(t,x)^3 dxdy=E[u](0).
\label{energy}
\end{eqnarray}

The Zakharov-Kuznetsov equation on $\rre^d$ :
\begin{equation} 
\pt_tu+\pt_{x_1}\Delta u=\pt_{x_1}(u^2), \qquad(t,x)\in\rre\times\rre^d
\label{gZK} 
\end{equation} 
%
has been studied from the 
point of view of well-posedness 
\cite{F1,GH,HK1,HK2,K,LS,MP,RV}, 
and stability of soliton \cite{CMPS,D,FHRY1,KSS,Y} 
etc. Concerning the scattering problem for (\ref{gZK}), 
from the fact that the solution of the linear equation 
associated with (\ref{gZK}) decays like $O(t^{-d/2})$ in $L^{\infty}$ 
as $t\to\infty$ 
(see \cite[Theorem 3.2]{KPV} for instance), 
and from the point of view of the linear scattering theory 
(see \cite{RS} for instance), 
we expect that if $d\ge3$, then (at least small) solution to (\ref{gZK}) 
scatters to the free solution. 
Herr-Kinoshita \cite{HK1} 
proved the small data scattering for the initial value problem of 
(\ref{gZK}) with $d\ge5$ in the scaling critical Sobolev space. 
Furthermore, they proved the 
scattering for (\ref{gZK}) with $d=4$ when the initial data is small 
and radial in the last $(d-1)$ variables. 
For two dimensional case $d=2$, the author \cite{S} 
proved the existence of small global solutions to (\ref{gZK}) 
which scatters to a given free solution. 
See also Farah-Linares-Pastor \cite{FLP}, 
Anjolras \cite{A}, and Correia-Kinoshita \cite{CK} for the scattering 
results on (\ref{gZK}) with $d=2$ and power type nonlinearity 
with degree higher than two. 

In this paper we consider the scattering problem for 
(\ref{gZK}) with physically important case $d=3$ 
in the framework of the final state problem. 
To state the our main theorem, 
we introduce several notation. For $0<\delta<1$, 
we define a semi-normed space $(X_{\delta},\|\cdot\|_{X_{\delta}}$) by
\begin{eqnarray}
X_{\delta}&:=&
\{f\in{{\mathcal S}}(\rre^3) ; \|f\|_{X_{\delta}}<\infty\},
\label{normX}\\
\|f\|_{X_{\delta}}&:=&
\||\pt_{x_1}|^{-\delta}\langle x\rangle f\|_{W_x^{6,1}}
+\|(3\pt_{x_1}^2-\pt_{x_2}^2-\pt_{x_3}^2)^{-2}\langle x\rangle f\|_{H_x^7}
\nonumber\\
& &+\|(3\pt_{x_1}^2-\pt_{x_2}^2-\pt_{x_3}^2)^{-3}f\|_{H_x^8},
\nonumber
\end{eqnarray}
where $x=(x_1,x_2,x_3)$, $\langle x\rangle=\sqrt{1+|x|^2}$, 
and 
$P(-i\nabla)={{\mathcal F}}^{-1}P(\xi){{\mathcal F}}$ for 
$P=|\xi_1|^{-\delta}$ and $(-3\xi_1^2+\xi_2^2+\xi_3^2)^{-m}$, $m=2,3$. 
Let $\{V(t)\}_{t\in\rre}$ 
be a unitary group on $L^2$ generated by $-\pt_{x_1}\Delta$. 
Then we have the following. 

\begin{thm}\label{main}
Let $0<\delta<1$. 
Then 
for any $u_+\in X_{\delta}$,  
there exists a unique global solution 
$u\in C(\rre; H^1(\rre^2))$ to (\ref{ZK}) satisfying
\begin{eqnarray}
\|u(t)-V(t)u_+\|_{H_x^3}
\lesssim t^{-\alpha}\label{scattering}
\end{eqnarray}
for any $t\ge1$, where $\alpha>1/2$. 
Similar result holds for negative time direction. 
\end{thm}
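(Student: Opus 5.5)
We prove the statement as worded: a global solution in $C(\rre;H^1)$ of (\ref{ZK}) satisfying (\ref{scattering}), where the spatial variable is $x\in\rre^3$ as in (\ref{ZK}). The argument has three stages.

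\textbf{Stage 1: an approximate solution.} Set $u_1(t)=V(t)u_+$. The $\langle x\rangle$-weighted $W^{6,1}$ part of $\|u_+\|_{X_\delta}$ and the dispersive estimate \cite[Theorem 3.2]{KPV} give
$$\|u_1(t)\|_{W^{k,\infty}}\lesssim t^{-3/2}.$$
This decay alone is not enough. The error $\pt_{x_1}(u_1^2)$ is only $O(t^{-3/2})$ in $H^3$, and integrating it from $t$ to $\infty$ gives $\|u-u_1\|_{H^3}=O(t^{-1/2})$, which misses $\alpha>1/2$.

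So we add a second iterate,
$$u_2(t)=-\int_t^\infty V(t-s)\,\pt_{x_1}\bigl(u_1(s)^2\bigr)\,ds,$$
and take $u_a=u_1+u_2$. In Fourier variables, $u_2$ is an oscillatory integral with phase $s\,\Phi(\xi,\eta)$, where $\Phi$ is the resonance function of the cubic symbol $\xi_1|\xi|^2$. We estimate $u_2$ by a normal form, that is, by integrating by parts in $s$ away from $\{\Phi=0\}$. We also use a stationary phase in $\eta$, whose Hessian degenerates exactly on $\{3\xi_1^2-\xi_2^2-\xi_3^2=0\}$. The negative powers $(3\pt_{x_1}^2-\pt_{x_2}^2-\pt_{x_3}^2)^{-2}$ and ${}^{-3}$ and $|\pt_{x_1}|^{-\delta}$ in $X_\delta$ are built in to pay for these divisions by the symbol and by $\xi_1$.

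The target bounds are:
\begin{itemize}
\item[(i)] $\|u_2(t)\|_{H^3}\lesssim t^{-\alpha}$ for some $\alpha>1/2$;
\item[(ii)] $\|\nabla u_a(t)\|_{L^\infty}\lesssim t^{-3/2}$;
\item[(iii)] the remainder $R=\pt_tu_a+\pt_{x_1}\Delta u_a-\pt_{x_1}(u_a^2)=-\pt_{x_1}(2u_1u_2+u_2^2)$ satisfies $\|R(t)\|_{H^3}\lesssim t^{-1-\alpha}$.
\end{itemize}
This normal-form and degenerate stationary-phase analysis is the main obstacle. The hardest part is (i) near the resonant and degenerate set, where I would first try dyadic localization in $|\xi_1|$ and in $|3\xi_1^2-\xi_2^2-\xi_3^2|$, combined with the weights $\langle x\rangle$.

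\textbf{Stage 2: solving near $t=\infty$ without smallness.} Write $u=u_a+w$. Then $w$ solves
$$\pt_tw+\pt_{x_1}\Delta w=\pt_{x_1}(2u_aw+w^2)-R,\qquad \|w(t)\|_{H^3}\to0\ \text{as } t\to\infty.$$
We solve this backward from infinity on $[T,\infty)$. Since $u_a$ is not small, we do not use a contraction in Strichartz spaces. Instead we use an $H^3$ energy estimate in which the top-order term $\int \pt^\beta(u_a w)\,\pt_{x_1}\pt^\beta w$ is handled by integrating by parts. The integration by parts shows that only the coefficients $\|\nabla u_a\|_{W^{3,\infty}}\lesssim t^{-3/2}$ appear, and these are integrable in time.

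For the quadratic term $\pt_{x_1}(w^2)$ we use $\|\nabla w\|_{L^\infty}\lesssim\|w\|_{H^3}$, which holds in three dimensions. This gives
$$\frac{d}{dt}\|w\|_{H^3}^2\gtrsim -\bigl(t^{-3/2}+\|w\|_{H^3}\bigr)\|w\|_{H^3}^2-t^{-1-\alpha}\|w\|_{H^3}.$$
We run a compactness scheme: solve on $[T,T_n]$ with data $w(T_n)=0$, obtain the uniform bound $\|w(t)\|_{H^3}\lesssim t^{-\alpha}$ by a bootstrap with $T$ large, and pass to the limit $n\to\infty$. Choosing $T$ large plays the role that smallness of $u_+$ would otherwise play. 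Uniqueness among solutions obeying (\ref{scattering}) follows from the same energy inequality applied to the difference of two solutions, together with Gronwall from $t=\infty$.

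\textbf{Stage 3: extension to all times.} Extend $u$ from $[T,\infty)$ to $\rre$ using local well-posedness of (\ref{ZK}) in $H^1$ (e.g.\ \cite{MP,RV}) and the conservation laws (\ref{mass}) and (\ref{energy}). By Gagliardo--Nirenberg in three dimensions,
$$\Bigl|\int u^3\Bigr|\lesssim\|u\|_{L^2}^{3/2}\|\nabla u\|_{L^2}^{3/2}.$$
Since the power $3/2$ is less than $2$, the energy controls $\|\nabla u\|_{L^2}$ in terms of the conserved quantities, so the $H^1$ norm stays bounded and the solution is global in $C(\rre;H^1)$. Persistence of regularity then keeps $u$ in $H^3$ on $\rre$. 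The negative time direction is handled identically by the symmetry $u(t,x)\mapsto u(-t,-x)$.
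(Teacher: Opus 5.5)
Your architecture is the paper's: the approximate solution $u_1+u_2$ with $u_2$ the second Duhamel iterate, energy estimates for $w=u-u_1-u_2$ solved backward from $t=\infty$ by compactness, globalization via mass, energy and Gagliardo--Nirenberg, and time reversal. The proposal nevertheless has two genuine gaps.

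\emph{The decisive estimate on $u_2$ is not proved.} You call it the main obstacle and sketch only a strategy (normal form away from $\{\Phi=0\}$, degenerate stationary phase, dyadic localization). Nothing in that sketch handles the space--time resonant set, where $\phi$ and $\nabla_\eta\phi$ vanish simultaneously and neither integration by parts gains anything. The missing idea is the null structure of Lemma \ref{algebra}: $\xi_1=\psi_{\text{time}}+\psi_{\text{space}}$. That is, the multiplier $\xi_1$ is a combination of $\phi$ and $\pt_{\eta_1}\phi,\pt_{\eta_2}\phi,\pt_{\eta_3}\phi$, with coefficients polynomial in $\xi$ and rational in $\eta$, singular only where $|\eta|(3\eta_1^2-\eta_2^2-\eta_3^2)=0$. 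This lets one integrate by parts in $\tau$ on the $\phi$ part and in $\eta_j$ on the $\pt_{\eta_j}\phi$ part over all of frequency space, with no cutoffs. The singular coefficients are paid for by the negative powers of $3\pt_{x_1}^2-\pt_{x_2}^2-\pt_{x_3}^2$ and the weight $\langle x\rangle$ in $Y_\delta$, and the decay comes from Lemma \ref{lemL}(i). The result is $\|u_2(t)\|_{H^4}\lesssim t^{-1-\delta/3}$ (Proposition \ref{xF}, Lemma \ref{cor:v2}).

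A rate strictly better than $t^{-1}$ is essential. Without smallness, Gronwall on $\int_t^\infty\|u_2\|_{H^4}\|w\|_{H^3}^2\,d\tau$ needs $\|u_2\|_{H^4}$ integrable in time. Your target (i), $t^{-\alpha}$ with $\alpha>1/2$, does not give this, and your (ii) for $u_2$ is not derived from anything. Separately, $X_\delta$ only yields $\|u_1(t)\|_{W^{4,\infty}}\lesssim t^{-1-\delta/3}$ (Lemma \ref{lemL}(i) with $a=\delta$), not $t^{-3/2}$. The $t^{-3/2}$ bound of \cite{KPV} carries the weight $|\pt_{x_1}|^{1/2}|3\pt_{x_1}^2-\pt_{x_2}^2-\pt_{x_3}^2|^{1/2}$ and an $L^1$ bound not controlled by $\|u_+\|_{X_\delta}$. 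This last point is harmless, since only integrability is used.

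\emph{Stage 2 does not close at the rate you set.} Suppose you bound the self-interaction through $\|\nabla w\|_{L^\infty}\lesssim\|w\|_{H^3}$ under the hypothesis $\|w(\tau)\|_{H^3}\le K\tau^{-\alpha}$. Then $\int_t^\infty\|w\|_{H^3}^3\,d\tau\sim K^3t^{1-3\alpha}$, and $t^{2\alpha}\cdot K^3t^{1-3\alpha}=K^3t^{1-\alpha}$ grows when $\alpha<1$. Enlarging $T$ makes this worse, so large $T$ cannot replace smallness here.

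The paper handles this in three steps:
\begin{itemize}
\item It adds $\||\pt_{x_1}|^{\nu/2}w\|_{L_\tau^{6/(3-2\nu)}(t,\infty;W_x^{2,2/\nu})}$ to the bootstrap norm $Z_T$ and bounds it by the Strichartz estimate (Lemma \ref{e2}).
\item It uses $\|\pt_{x_1}w\|_{L^\infty}\lesssim\||\pt_{x_1}|^{\nu/2}w\|_{W^{2,2/\nu}}$ with H\"older in time, so the cubic contribution becomes $T^{-\alpha+\nu/3+1/2}\|w\|_{Z_T}^3$, small when $\alpha>1/2+\nu/3$.
\item Since only $\pt_{x_1}w$ is controlled in $L^\infty$, it uses the modified energy $\|w\|_{H^3}^2+6\int w(\Delta w)^2dx+M\|w\|_{L^2}^{14}$, which cancels $\int\nabla w\cdot\nabla\Delta w\,\pt_{x_1}\Delta w\,dx$.
\end{itemize}

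Alternatively, your simpler Sobolev-based energy argument does close once you have the full bound $\|u_2(t)\|_{H^4}\lesssim t^{-1-\delta/3}$. Then $\|R(t)\|_{H^3}\lesssim t^{-2-2\delta/3}$, and the bootstrap works with $\alpha\in(1,1+2\delta/3)$.

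The same non-integrability of $t^{-\alpha}$ for $\alpha\le1$ also undermines your Gronwall-from-infinity uniqueness argument in the class (\ref{scattering}). It goes through only if $\alpha>1$ or if you have time-integrable control of $\|\pt_{x_1}(u-V(t)u_+)\|_{L^\infty}$, for example from the Strichartz norm.
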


\begin{rem} 
In \cite{S}, we proved the scattering result similar to 
Theorem \ref{main} in two dimensional case under the 
smallness assumption on the asymptotic states $u_+$. 
Note that in Theorem \ref{main}, we do not require 
smallness assumption on the asymptotic states 
thanks to good time decay of the free solution. 
\end{rem}

\begin{rem}\label{rem2}
The differential operator 
$3\pt_{x_1}^2-\pt_{x_2}^2-\pt_{x_3}^2$ 
in $X_{\delta}$ appears naturally in study of the linear/nonlinear 
scattering for (\ref{ZK}). 
For example, the solution to the linear equation for (\ref{ZK}) 
satisfies the following time decay estimate : 
\begin{eqnarray*}
\||\pt_{x_1}|^{\frac12}|3\pt_{x_1}^2-\pt_{x_2}^2-\pt_{x_3}^2|^{\frac12}
V(t)Pf\|_{L_x^{\infty}}
\lesssim t^{-\frac{3}{2}}\|f\|_{L_x^1},
\end{eqnarray*}
where $P$ is a suitable projection (see \cite[Theorem 3.2]{KPV}).
\end{rem}

\begin{rem} 
In the definition of the function space $X_{\delta}$, 
we required that $f\in{{\mathcal S}}$ for the simplicity of the argument. 
This requirement can be relaxed by the density argument for example. 
However, we do not discuss about it in this paper.
\end{rem}

\vskip2mm

We now give the outline of the proof of 
Theorem \ref{main}. As in \cite{S}, 
for given final data $u_+\in X_{\delta}$, 
we introduce a new unknown function
\begin{eqnarray*} 
w(t,x):=u(t,x)-u_1(t,x)-u_2(t,x),
\end{eqnarray*} 
where $u_1$ and $u_2$ are given by 
\begin{eqnarray} 
u_1(t,x)&=&[V(t)u_+](x),\label{v1}\\
u_2(t,x)&=&-
\pt_{x_1}\int_t^{\infty}
V(t-\tau)[u_1(\tau)^2]d\tau\label{v2}\\
&=&-
\pt_{x_1}\int_t^{\infty}
V(t-\tau)[(V(\tau)u_+)^2]d\tau.
\nonumber
\end{eqnarray} 
Let us derive the evolution equation for $w$. 
Let ${{\mathcal L}}=\pt_t+\pt_{x_1}\Delta$. 
Since ${{\mathcal L}}u_1=0$ and 
${{\mathcal L}}u_2=\pt_{x_1}(u_1^2)$, 
we have   
\begin{eqnarray*} 
{{\mathcal L}}w={{\mathcal L}}u-{{\mathcal L}}u_1-{{\mathcal L}}u_2
={{\mathcal L}}u-\pt_{x_1}(u_1^2).
\end{eqnarray*} 
If $u$ satisfies (\ref{ZK}), then 
\begin{eqnarray*} 
{{\mathcal L}}u
&=&\pt_{x_1}(u^2)\\
&=&\pt_{x_1}\{(w+u_1+u_2)^2\}\\
&=&\pt_{x_1}\{w^2+2(u_1+u_2)w+u_1^2+2u_1u_2+u_2^2\}\\
&=&\pt_{x_1}(w^2)+2\pt_{x_1}\{(u_1+u_2)w\}+\pt_{x_1}(u_1^2+2u_1u_2+u_2^2).
\end{eqnarray*} 
Hence we see that $w$ satisfies 
\begin{eqnarray} 
\ \ \ {{\mathcal L}}w
=\pt_{x_1}(w^2)+2\pt_{x_1}\left\{(u_1+u_2)w\right\}
+\pt_{x_1}(2u_1u_2+u_2^2).
\label{ZK11}
\end{eqnarray}
To show Theorem \ref{main}, we prove the existence  
of solution $w$ to (\ref{ZK11}) satisfying 
\begin{eqnarray*}
\sup_{t\in[T,\infty)}t^{\alpha}
\left(\|w(t)\|_{H_{x}^3}
+\||\pt_{x_1}|^{\frac{\nu}{2}}w(\tau)
\|_{L_{\tau}^{\frac{6}{3-2\nu}}(t,\infty;W_{x}^{2,\frac{2}{\nu}})}
\right)<\infty
\end{eqnarray*}
for suitable $\alpha>0$, $T>0$ and $0<\nu<1/2$. We first note that 
$w$ and $u_1$ can be easily estimated by 
the energy and the linear dispersive estimates (Strichartz estimates, 
see Lemma \ref{lemL} below). 
The main difficulty to prove Theorem \ref{main} 
lies on bilinear dispersive estimates for $u_2$. More precisely, we need 
$L^2$ estimate for the bilinear oscillatory integral : 
\begin{eqnarray} 
\ \ \ \ \ u_2&=&-\pt_{x_1}\int_t^{\infty}V(t-\tau)\left[(V(\tau)u_+)^2\right]d\tau
\label{u1}\\
&=&
-{{\mathcal F}}_{\xi\mapsto x}^{-1}
\left[\xi_1e^{it\xi_1|\xi|^2}
\int_t^{\infty}\!\!\!\int_{\rre^3}e^{-i\tau\phi(\xi,\eta)}
\widehat{u}_+(\xi-\eta)\widehat{u}_+(\eta)
d\eta d\tau\right],\nonumber
\end{eqnarray}  
where $\xi=(\xi_1,\xi_2,\xi_3)$, $\eta=(\eta_1,\eta_2,\eta_3)$ 
and $\phi(\xi,\eta)
=\xi_1|\xi|^2-(\xi_1-\eta_1)|\xi-\eta|^2-\eta_1|\eta|^2$. 
To derive time decay estimates for (\ref{u1}) in $L^2$, 
we employ so called 
space-time resonance method which is developed by 
Gustafson-Nakanishi-Tsai \cite{GNT2} and 
Germain-Masmoudi-Shatah \cite{GMS1,GMS2} etc. 
In this paper, 
we crucially use the ``null structure" of the nonlinear term 
which can be represented as the algebraic identity 
\begin{eqnarray}
\xi_1=
\left(\sum p(\xi)q(\eta)\right)\phi
+
\sum_{j=1}^3\left(\sum r(\xi)s(\eta)\right)\pt_{\eta_j}\phi
\label{algebra2}
\end{eqnarray}
with suitable polynomials $p,r$ and rational functions $q, s$, 
see Lemma \ref{algebra} below for detail. 
Combining (\ref{algebra2}) with  
integration by parts both in $\tau$ and $\eta$,  
we derive the time decay of (\ref{u1}) in $L^2$.  
Note that the null structure of the nonlinearity is employed 
in various contexts to study global dynamics of 
nonlinear PDEs 
since the pioneering work by Klainerman \cite{Kl}. 
We note that this approach was also used in \cite{S} 
to prove the existence of small global solution to (\ref{ZK}) 
with $d=2$ which scatters 
to the given free solution. Furthermore, in \cite{S}, 
the author transformed (\ref{gZK}) into 
the equation which is symmetric with respect to 
$x_1$ and $x_2$. Thanks to this transform, the problem 
became more transparent in two dimensional case. 
On the other hand, the similar transform  
is not known for (\ref{gZK}) with $d\neq2$. 
Therefore derivation of the key identity (\ref{algebra2}) for $d=3$ 
is more complicated compared to two dimensional case. 
%
Once we obtain $L^2$ estimate for $u_2$, 
we have an existence of solution to (\ref{ZK11}) 
by the compactness argument. 


We introduce several notations and function spaces 
which are used throughout this paper. 
For $f\in{{\mathcal S}}'(\rre^{3})$, $\hat{f}(\xi)$ 
denotes the Fourier transform of $f$. Let 
$\langle\xi\rangle=\sqrt{|\xi|^2+1}$. The differential operator 
$\langle\nabla\rangle^s=(1-\Delta)^{s/2}$ denotes 
the Bessel potential of order $-s$. 
For $1\le p,q\le\infty$, $L_{\tau}^p(t,\infty;L_x^q)$ is defined 
as follows:
\begin{eqnarray*}
L_{\tau}^p(t,\infty;L_x^q)&=&\{u\in{{\mathcal S}}'(\rre^{1+3});
\|u\|_{L_{\tau}^p(t,\infty;L_x^q)}<\infty\},\\
\|u\|_{L_{\tau}^p(t,\infty;L_x^q)}&=&
\left\|\|u(\tau)\|_{L_x^q}\right\|_{L_{\tau}^p(t,\infty)}.
\end{eqnarray*}
We will use the inhomogeneous Sobolev spaces
\begin{eqnarray*}
W^{s,q}=\lbrace f \in \mathcal S'(\rre^3); 
\|f\|_{W^{s,q}}=\| 
\langle \nabla\rangle^{s} f\|_{L^q}<\infty\rbrace,
\end{eqnarray*}
where $s\in\rre$ and $1\le q\le\infty$. 
We denote 
$H^s=W^{s,2}$. 
We denote $A \lesssim B$ if there exists a constant $C > 0$ 
such that $A \le C B$ holds and $ A \sim B $ if $ A \lesssim B \lesssim A $.

The outline of the paper is as follows. In Section 2, we 
give the decay and Strichartz estimates for the  
linearized equation of (\ref{ZK11}). In Section 3, we derive 
the key bilinear dispersive estimates. Finally,
in Section 4, we prove Theorem \ref{main}. 
In Appendix, we give the proof of Lemma \ref{algebra}. 

\vskip2mm

%
%
\section{Linear Dispersive Estimates}

In this section we derive the linear estimates associated with 
(\ref{ZK11}): 
\begin{equation} 
\left\{
\begin{array}{l}
\partial_tw+\pt_{x_1}\Delta w=0,\qquad\quad (t,x)\in
\mathbb{R}\times\mathbb{R}^{3}, \\
w(0,x)=f(x),\qquad\qquad\ x\in\mathbb{R}^{3}.
\end{array}
\right.\label{LZK} 
\end{equation}
Let us recall that $V(t)=e^{-t\pt_{x_1}\Delta}$  
is the unitary group on $L^2$ generated by 
$-\pt_{x_1}\Delta$. Then, the solution to (\ref{LZK}) 
can be written as $V(t)f$. 

We have the following decay and the Strichartz estimates for 
(\ref{LZK}).

\vskip2mm

\begin{lem}\label{lemL} 
(i) Let $0<a<1$ and $0\le b\le1$. Then for any $t>0$, we have
\begin{eqnarray}
\||\pt_{x_1}|^{ab}V(t)f\|_{L_{x}^q}
&\lesssim&t^{-b(1+\frac{a}{3})}
\|f\|_{L_{x}^{q'}},\label{linear1}
\end{eqnarray}
where $q=2/(1-b)$ and $q'$ is the H\"older conjugate exponent of $q$.

\vskip2mm
\noindent
(ii) Let $0<a<1$ and $0<b<(1+a/3)^{-1}$. 
Then, we have 
\begin{equation}
\left\||\pt_{x_1}|^{ab}
\int_{\tau}^{\infty}V(\tau-\tau')F(\tau')d\tau'
\right\|_{L_{\tau}^{p}(t,\infty;L_{x}^{q})}
\lesssim\|
F\|_{L_{\tau}^{p'}(t,\infty;L_{x}^{q'})},
\label{linear3}
\end{equation}
where $p=2/\{b(1+a/3)\}$ and $q=2/(1-b)$.
\end{lem}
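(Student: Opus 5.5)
We obtain (i) by complex interpolation between the trivial $L^2$ bound ($b=0$) and an $L^1$--$L^\infty$ decay estimate ($b=1$). Part (ii) then follows from (i) by the standard $TT^*$/Christ--Kiselev machinery, in the form of Keel--Tao or by a direct Hardy--Littlewood--Sobolev argument.

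\textbf{Step 1: the endpoint decay estimate.} We claim that for $0<a<1$,
\begin{equation*}
\||\pt_{x_1}|^{a}V(t)f\|_{L_x^\infty}\lesssim t^{-1-\frac a3}\|f\|_{L_x^1}.
\end{equation*}
The kernel is
\begin{equation*}
K_t(x)=\int_{\rre^3}|\xi_1|^{a}e^{i(x\cdot\xi+t\xi_1|\xi|^2)}\,d\xi .
\end{equation*}
The integrals in $\xi_2,\xi_3$ are Gaussian (Fresnel) integrals. They give $\int e^{i(x'\cdot\xi'+t\xi_1|\xi'|^2)}d\xi'=\frac{c}{t|\xi_1|}e^{-i|x'|^2/(4t\xi_1)}$, where $\xi'=(\xi_2,\xi_3)$, and are justified by inserting a Gaussian regularization. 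Hence
\begin{equation*}
K_t(x)=\frac ct\int_{\rre}|\xi_1|^{a-1}\,e^{i\left(x_1\xi_1+t\xi_1^3-\frac{|x'|^2}{4t\xi_1}\right)}\,d\xi_1 .
\end{equation*}
We bound this uniformly in $x$ by $Ct^{-a/3}$. After the scaling $\xi_1=t^{-1/3}s$, the prefactor becomes $t^{-a/3}$ and we must bound
\begin{equation*}
\sup_{A,B}\Bigl|\int_{\rre}|s|^{a-1}e^{i(As+s^3-B/s)}\,ds\Bigr|.
\end{equation*}
Near $s=0$ the weight $|s|^{a-1}$ is integrable since $a>0$, so any bounded neighbourhood gives an $O(1)$ contribution regardless of the phase. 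For $|s|\ge1$ we use van der Corput with the second derivative of the phase, $6s-2B/s^3$. It has the same sign as $s$ when $B\ge0$, hence $|\cdot|\ge 6|s|$. This gives a contribution $\lesssim\sum_k 2^{k(a-1)}2^{-k/2}<\infty$ over dyadic blocks. Each block is treated with the monotone weight $|s|^{a-1}$ via integration by parts of the van der Corput bound.

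When $B<0$ the second derivative can vanish near $|s|=(|B|/3)^{1/4}$. There we use the third derivative $6+6B/s^4$, which is $\ge6$ in absolute value for $B<0$. The third-derivative van der Corput bound gives $O(1)$ on a dyadic block and decay $2^{-k/2}$ elsewhere. Since $a<1$, the sum converges. This uniform oscillatory integral bound is the main obstacle, specifically keeping the constants uniform in $(A,B)$ and handling the singular weight $|s|^{a-1}$. The condition $a<1$ is exactly what makes the large-$|s|$ tail summable.

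\textbf{Step 2: interpolation for (i).} We consider the analytic family $z\mapsto|\pt_{x_1}|^{az}V(t)$ on the strip $0\le\Re z\le1$. On $\Re z=0$ we have $L^2\to L^2$ with norm $1$, by unitarity and since $|\xi_1|^{i\gamma}$ is unimodular. On $\Re z=1$ we have the Step 1 bound; the imaginary part only multiplies the symbol by $|\xi_1|^{ia\gamma}$, and the argument of Step 1 is insensitive to it, with at most polynomial growth in $\gamma$. Stein's interpolation at $z=b$ yields (\ref{linear1}) with $q=2/(1-b)$.

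\textbf{Step 3: Strichartz for (ii).} Set $T=|\pt_{x_1}|^{ab/2}V(\cdot)$ (splitting the derivative symmetrically). Then $TT^*F$ has kernel $|\pt_{x_1}|^{ab}V(\tau-\tau')$, which by (i) maps $L^{q'}\to L^q$ with norm $|\tau-\tau'|^{-b(1+a/3)}$. Since $0<b(1+a/3)<1$ and $2/p=b(1+a/3)$, the one-dimensional Hardy--Littlewood--Sobolev inequality gives $L^{p'}_\tau\to L^p_\tau$ boundedness of the full-line integral. The truncation to $\tau'>\tau$ on $(t,\infty)$ is handled either by Christ--Kiselev (valid since $p>2>p'$) or directly, because HLS applied to $|\tau-\tau'|^{-2/p}$ dominates the truncated kernel pointwise. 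We avoid the endpoint by the strict inequality $b<(1+a/3)^{-1}$, so $p>2$. This gives (\ref{linear3}).
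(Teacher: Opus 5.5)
The paper gives no argument for this lemma; it only refers to \cite[Lemma 3.3, Proposition 3.1]{LS}. Your proposal is a self-contained version of the standard argument: Fresnel integration in $\xi_2,\xi_3$, a uniform one-dimensional oscillatory bound, Stein interpolation, then Minkowski plus one-dimensional Hardy--Littlewood--Sobolev. The overall structure is sound, but the sign analysis in Step~1 is reversed.

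With your own formulas, for $t>0$ the rescaled phase is $\Phi(s)=As+s^3-B/s$ with $B=t^{-2/3}|x'|^2/4\ge0$, so the case $B<0$ never occurs. For $B>0$, $\Phi''(s)=6s-2B/s^3$ does not have the sign of $s$, since the two terms have opposite signs, and it vanishes at $|s|=(B/3)^{1/4}$. On the other hand $\Phi'''(s)=6+6B/s^4\ge6$ holds precisely because $B\ge0$; for $B<0$ it is the other way round. So the inequalities attached to your two cases must be swapped, and the only relevant case is the degenerate one.

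After this correction the argument is simpler than you make it. Since $\Phi'''\ge6$ on all of $\rre\setminus\{0\}$, apply the third-derivative van der Corput lemma once on $[1,\infty)$ and once on $(-\infty,-1]$. The amplitude $|s|^{a-1+ia\gamma}$ has total variation $\lesssim(1+|\gamma|)/(1-a)$ there, and $\int_{|s|\le1}|s|^{a-1}ds=2/a$ handles the origin. This gives a bound uniform in $(A,B)$, with the polynomial growth in $\gamma$ that Stein's theorem allows, and no dyadic decomposition is needed.

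Relatedly, $a<1$ is not what makes your dyadic tail summable; with the $2^{-k/2}$ bound the tail converges for $a<3/2$. What $a<1$ does is keep the contribution $O(B^{(a-1)/4})$ of the block around $|s|=(B/3)^{1/4}$ bounded as $B\to\infty$, and make the total variation above finite.

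Three further points.
\begin{enumerate}
\item In Step~3 you apply (i) at the negative times $\tau-\tau'<0$, while (i) is stated only for $t>0$. This is harmless, because $V(-s)=RV(s)R$ with $(Rf)(x_1,x')=f(-x_1,x')$, and $R$ is an isometry of every $L^q$ commuting with $|\pt_{x_1}|$. It should still be said.
\item The regularization justifying the Fresnel step should act in $\xi_1$ only, e.g.\ $e^{-\epsilon\xi_1^2}$, whose variation is uniformly bounded. A Gaussian in $\xi_2,\xi_3$ changes the phase $-|x'|^2/(4t\xi_1)$. Also, the Fresnel factor is $c/(t\xi_1)$, not $c/(t|\xi_1|)$, though the resulting $\mathrm{sgn}(s)$ is irrelevant once the half-lines are treated separately.
\item The $TT^*$ and Christ--Kiselev framing is unnecessary. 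Estimate (ii) has the same pair $(p,q)$ on both sides, so Minkowski, (i), and HLS with exponent $2/p=b(1+a/3)\in(0,1)$, applied to $\mathbf{1}_{(t,\infty)}(\tau')\|F(\tau')\|_{L_x^{q'}}$, give it directly, as you already note.
\end{enumerate}
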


\vskip2mm

\begin{proof}[Proof of Lemma \ref{lemL}.] 
See \cite[Lemma 3.3]{LS} for the proof of (\ref{linear1}), 
and \cite[Proposition 3.1]{LS}  for the proof of (\ref{linear3}). 
\end{proof}

\vskip2mm

%
%
\section{Bilinear Dispersive Estimates}

In this section we derive $L^2$ estimate for $u_2$ 
defined by (\ref{v2}) which is key to prove 
Theorem \ref{main}. We show the following.

\begin{prop}\label{xF}
Let $0<\delta<1$. Then for any $t>0$, we have 
\begin{eqnarray}
\ \ \ \ \ \left\|\pt_{x_1}\int_t^{\infty}V(t-\tau)
\left[(V(\tau)f)(V(\tau)g)\right]d\tau\right\|_{L_x^2}
\lesssim t^{-1-\frac{\delta}{3}}
\|f\|_{Y_{\delta}}\|g\|_{Y_{\delta}},\label{xFe}
\end{eqnarray}
where 
\begin{eqnarray*}
Y_{\delta}&=&
\{f\in{{\mathcal S}}(\rre^3) ; \|f\|_{Y_{\delta}}<\infty\},\\
\|f\|_{Y_{\delta}}&=&
\||\pt_{x_1}|^{-\delta}\langle x\rangle f\|_{W_x^{2,1}}
+\|(3\pt_{x_1}^2-\pt_{x_2}^2-\pt_{x_3}^2)^{-2}\langle x\rangle f\|_{H_x^3}
\\
& &+\|(3\pt_{x_1}^2-\pt_{x_2}^2-\pt_{x_3}^2)^{-3}f\|_{H_x^4}.
\end{eqnarray*}
\end{prop}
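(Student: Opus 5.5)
Work on the Fourier side. By Plancherel and unitarity of $V(t)$, the $L^2$ norm of the left side of (\ref{xFe}) equals the $L^2_\xi$ norm of
\begin{eqnarray*}
I(t,\xi):=\xi_1\int_t^{\infty}\!\!\!\int_{\rre^3}e^{-i\tau\phi(\xi,\eta)}
\hat f(\xi-\eta)\hat g(\eta)\,d\eta\, d\tau .
\end{eqnarray*}
The basic tool is the null-structure identity (\ref{algebra2}) of Lemma \ref{algebra}: $\xi_1=\sum p(\xi)q(\eta)\,\phi+\sum_j \sum r(\xi)s(\eta)\,\pt_{\eta_j}\phi$. The rational functions $q,s$ have singularities on the cone $\{3\eta_1^2=\eta_2^2+\eta_3^2\}$ (and its $\xi-\eta$ counterpart). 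This is why $Y_\delta$ contains the negative powers $(3\pt_{x_1}^2-\pt_{x_2}^2-\pt_{x_3}^2)^{-2}$ and $(3\pt_{x_1}^2-\pt_{x_2}^2-\pt_{x_3}^2)^{-3}$: they absorb these denominators. I will insert the identity into $I$ and split $I=I_1+I_2$ accordingly.

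For the time-resonance part $I_1$ (terms with factor $\phi$), write $\phi e^{-i\tau\phi}=i\pt_\tau e^{-i\tau\phi}$ and integrate by parts in $\tau$. This produces a boundary term at $\tau=t$,
\begin{eqnarray*}
\sum p(\xi)\,e^{-it\phi}\int q(\eta)\hat f(\xi-\eta)\hat g(\eta)\,d\eta ,
\end{eqnarray*}
which, after moving $q(\eta)$ onto $\hat g$ and undoing the Fourier transform, is a product $V(t)\tilde f\cdot V(t)\tilde g$ with $\tilde g=q(-i\nabla)g$, with polynomial derivatives $p$ outside. Its $L^2$ norm is bounded by $\|V(t)\tilde f\|_{L^2}\|V(t)\tilde g\|_{L^\infty}$. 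Applying (\ref{linear1}) with $b=1$, $a=\delta$ to a function $|\pt_{x_1}|^{-\delta}\tilde g\in W^{\cdot,1}$ gives the decay $t^{-1-\delta/3}$. The other factor is controlled by the $H^4$ component of $Y_\delta$, since the order-$(-3)$ inverse cone operator dominates $q$. No $\tau$-derivative falls on the amplitude, because the integrand is $V(\tau)f\cdot V(\tau)g$ in profile form and the profiles are time independent. So $I_1$ is done.

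For the space-resonance part $I_2$ (terms with $\pt_{\eta_j}\phi$), write $\pt_{\eta_j}\phi\, e^{-i\tau\phi}=\frac{i}{\tau}\pt_{\eta_j}e^{-i\tau\phi}$ and integrate by parts in $\eta$. This gains $\tau^{-1}$, and the derivative falls on $s(\eta)\hat f(\xi-\eta)\hat g(\eta)$, giving weighted terms $x_jf$ or $x_jg$ (the $\langle x\rangle$ factors in $Y_\delta$) and terms with $\pt_\eta s$, which have one extra power of the cone denominator. Returning to physical space, each term is $\tau^{-1}\,\mathrm{poly}(\nabla)\bigl(V(\tau)F\cdot V(\tau)G\bigr)$, where one of $F,G$ carries $\langle x\rangle$ and a cone inverse. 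Then $\int_t^\infty \tau^{-1}\|V(\tau)F\|_{L^2}\|V(\tau)G\|_{L^\infty}d\tau\lesssim \int_t^\infty\tau^{-2-\delta/3}d\tau\sim t^{-1-\delta/3}$, using (\ref{linear1}) again with $|\pt_{x_1}|^{-\delta}\langle x\rangle f\in W^{2,1}$ and the $L^2$ piece $(3\pt_{x_1}^2-\dots)^{-2}\langle x\rangle f\in H^3$. This is even better than needed. Alternatively, the Minkowski inequality in $\tau$ suffices, so no Strichartz estimate is required.

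The main obstacle is bookkeeping the singular multipliers. I must make sure each $q,s,\pt_\eta s$ is a product of a polynomial and an inverse cone power of order at most $3$ (respectively $2$ when paired with a weight $x$), and that these Fourier multipliers act boundedly on the spaces used. In particular, the commutator of $\langle x\rangle$ with $(3\pt_{x_1}^2-\pt_{x_2}^2-\pt_{x_3}^2)^{-2}$ must be handled, which produces one more inverse power and is the reason for the third norm. I also need to place the polynomial weights $p(\xi),r(\xi)$ on the product at derivative cost, which the $W^{2,1}$, $H^3$, $H^4$ regularity must cover via Leibniz and Sobolev. I would first verify the explicit form of Lemma \ref{algebra} to confirm these degree counts.
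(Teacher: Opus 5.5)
Your overall route is the paper's: insert the identity of Lemma \ref{algebra}, integrate by parts in $\tau$ on the $\phi$-terms and in $\eta_j$ on the $\pt_{\eta_j}\phi$-terms, then use Plancherel, H\"older in $L^\infty\times L^2$ and the dispersive bound (\ref{linear1}) with $b=1$, $a=\delta$. The $\eta_j$ integration gains $\tau^{-1}$ and produces $x_jf$, $x_jg$ and $\pt_{\eta_j}$ of the coefficients.

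\textbf{The gap.} The time-resonance step does not close as you wrote it. You bound the boundary term by $\|V(t)\tilde f\|_{L^2}\|V(t)\tilde g\|_{L^\infty}$ with $\tilde g=q(-i\nabla)g$. You then apply (\ref{linear1}) to $|\pt_{x_1}|^{-\delta}\tilde g$ in an $L^1$-based space. Nothing in $Y_\delta$ controls that norm:
\begin{itemize}
\item the only $L^1$-based piece of $\|g\|_{Y_\delta}$ carries no inverse cone operator;
\item $q(-i\nabla)$ has a symbol that blows up on $\{3\eta_1^2=\eta_2^2+\eta_3^2\}$, so it is not bounded on $L^1$ (an $L^1$ Fourier multiplier must be a bounded function).
\end{itemize}

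\textbf{The fix.} Reverse the roles, as the paper does. Put the decay on the factor without the singular multiplier:
\[
\|\pt^\alpha V(t)f\|_{L^\infty}\lesssim t^{-1-\delta/3}\,\||\pt_{x_1}|^{-\delta}f\|_{W^{|\alpha|,1}} .
\]
Put the multiplier on the $L^2$ factor, $\|\pt^\beta q(-i\nabla)V(t)g\|_{L^2}=\|\pt^\beta q(-i\nabla)g\|_{L^2}$. This is what the inverse-cone $H^s$ parts of $Y_\delta$ are built for; compare the paper's $\|\pt_{x_1}V(t)f\|_{L^\infty}\|B_{0,1}(-i\nabla)V(t)g\|_{L^2}$. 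Your next sentence, that the $H^4$ component dominates $q$, only makes sense after this swap. It is also the assignment you already use in the space-resonance part.

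\textbf{Smaller points.}
\begin{itemize}
\item Integrating by parts in $\tau$ over $(t,\infty)$ also gives a boundary term at $\tau=\infty$, the paper's $\limsup_{T\to\infty}$ term. It vanishes because the same $L^\infty\times L^2$ bound is $O(T^{-1-\delta/3})$.
\item In the space-resonance part, ``one of $F,G$ carries $\langle x\rangle$ and a cone inverse'' is not the only case. When $\pt_{\eta_j}$ falls on $\hat f(\xi-\eta)$, the weight and the multiplier sit on different factors: $x_jf$ goes in $L^\infty$ and $s(-i\nabla)g$ in $L^2$. The same rule covers this case.
\item $\int_t^\infty\tau^{-2-\delta/3}\,d\tau\sim t^{-1-\delta/3}$ is exactly the claimed rate, not better than needed.
\end{itemize}
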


To prove Proposition \ref{xF}, we need to do some preparation. 
Simple calculation yields
\begin{eqnarray} 
\lefteqn{\pt_{x_1}\int_t^{\infty}V(t-\tau)
\left[(V(\tau)f)(V(\tau)g)\right]d\tau}\label{o1}\\
&=&
{{\mathcal F}}_{\xi\mapsto x}^{-1}
\left[\xi_1e^{it\xi_1|\xi|^2}
\int_t^{\infty}\!\!\!\int_{\rre^3}e^{-i\tau\phi(\xi,\eta)}
\widehat{f}(\xi-\eta)\widehat{g}(\eta)
d\eta d\tau\right](x),\nonumber
\end{eqnarray} 
where $\xi=(\xi_1,\xi_2,\xi_3), \eta=(\eta_1,\eta_2,\eta_3)$
and the resonant function $\phi$ is given by 
\begin{eqnarray} 
\phi(\xi,\eta)
&=&\xi_1|\xi|^2-(\xi_1-\eta_1)|\xi-\eta|^2-\eta_1|\eta|^2.
\label{r1}
\end{eqnarray} 
Therefore, to prove Proposition \ref{xF}, we need to estimate 
\begin{eqnarray} 
I(f,g):=\xi_1\int_t^{\infty}\!\!\!\int_{\rre^3}e^{-i\tau\phi(\xi,\eta)}
\widehat{f}(\xi-\eta)\widehat{g}(\eta)
d\eta d\tau. \label{Ifg}
\end{eqnarray} 
We evaluate (\ref{Ifg}) by using the space-time resonance method. 
To this end, we derive the following key algebraic identity. 

\begin{lem}\label{algebra} 
We have
\begin{eqnarray}
\xi_1=\psi_{\text{time}}(\xi,\eta)
+\psi_{\text{space}}(\xi,\eta),
\label{key2}
\end{eqnarray}
where $\psi_{\text{time}}$ and $\psi_{\text{space}}$ 
are given by 
\begin{eqnarray}
\psi_{\text{time}}(\xi,\eta)
&=&
\left(A_{0}(\eta)+\xi_1B_{0,1}(\eta)+\xi_2B_{0,2}(\eta)+\xi_3B_{0,3}(\eta)\right)\phi
\label{tr}\\
\ \ \ \psi_{\text{space}}(\xi,\eta)
&=&
\left(A_{1}(\eta)+\xi_1B_{1,1}(\eta)
+\xi_2B_{1,2}(\eta)+\xi_3B_{1,3}(\eta)\right)\pt_{\eta_1}\phi
\label{sr}\\
& &
+\left(A_2(\eta)+\xi_1B_{2,1}(\eta)
+\xi_2B_{2,2}(\eta)\right.\nonumber\\
& &\qquad\qquad\qquad\qquad\left.
+\xi_2\xi_3C_{2,1}(\eta)
+\xi_3^2C_{2,2}(\eta)\right)\pt_{\eta_2}\phi
\nonumber\\
& &
+\left(A_{3}(\eta)
+\xi_1B_{3,1}(\eta)
+\xi_3B_{3,2}(\eta)\right.\nonumber\\
& &\qquad\qquad\qquad\qquad\left.
+\xi_2\xi_3C_{3,1}(\eta)
+\xi_2^2C_{3,2}(\eta)\right)\pt_{\eta_3}\phi,
\nonumber
\end{eqnarray}
where $A_j$, $B_{j,k}$ and $C_{j,k}$ are rational functions 
in $\eta$ satisfying 
\begin{eqnarray*}
|A_0(\eta)|&\lesssim&(\eta_1^2+\eta_2^2+\eta_3^2)(3\eta_1^2-\eta_2^2-\eta_3^2)^{-2},\\
|\pt_{\eta_j}^iA_j(\eta)|
&\lesssim&\left\{
\begin{aligned}
&(\eta_1^2+\eta_2^2+\eta_3^2)^{\frac32}(3\eta_1^2-\eta_2^2-\eta_3^2)^{-2}
\qquad(i=0),\\
&(\eta_1^2+\eta_2^2+\eta_3^2)(3\eta_1^2-\eta_2^2-\eta_3^2)^{-2}\\
&+(\eta_1^2+\eta_2^2+\eta_3^2)^2|3\eta_1^2-\eta_2^2-\eta_3^2|^{-3}
\qquad(i=1)
\end{aligned}
\right.
\end{eqnarray*}
for $j=1,2,3$,
\begin{eqnarray*}
|B_{0,k}(\eta)|&\lesssim&(\eta_1^2+\eta_2^2+\eta_3^2)(3\eta_1^2-\eta_2^2-\eta_3^2)^{-2}
\end{eqnarray*}
for $k=1,2,3$,
\begin{eqnarray*}
|\pt_{\eta_j}^iB_{j,k}(\eta)|
&\lesssim&\left\{
\begin{aligned}
&(\eta_1^2+\eta_2^2+\eta_3^2)(3\eta_1^2-\eta_2^2-\eta_3^2)^{-2}
\quad(i=0),\\
&(\eta_1^2+\eta_2^2+\eta_3^2)^{\frac12}(3\eta_1^2-\eta_2^2-\eta_3^2)^{-2}\\
&+(\eta_1^2+\eta_2^2+\eta_3^2)^{\frac32}
|3\eta_1^2-\eta_2^2-\eta_3^2|^{-3}
\quad(i=1)
\end{aligned}
\right.
\end{eqnarray*}
for $(j,k)=(1,1), (1,2), (1,3), (2,1), (2,2), (3,1), (3,2)$, 
\begin{eqnarray*}
|\pt_{\eta_j}^iC_{j,k}(\eta)|
&\lesssim&\left\{
\begin{aligned}
&(\eta_1^2+\eta_2^2+\eta_3^2)^{\frac12}(3\eta_1^2-\eta_2^2-\eta_3^2)^{-2}
\quad(i=0),\\
&(3\eta_1^2-\eta_2^2-\eta_3^2)^{-2}\\
&+(\eta_1^2+\eta_2^2+\eta_3^2)|3\eta_1^2-\eta_2^2-\eta_3^2|^{-3}
\quad(i=1),
\end{aligned}
\right.
\end{eqnarray*}
for $(j,k)=(2,1), (2,2), (3,1), (3,2)$.

\end{lem}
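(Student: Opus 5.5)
The identity (\ref{key2}) is pure algebra in $\xi$ with $\eta$ treated as a parameter, so I would expand $\phi$ and its $\eta$-derivatives as polynomials in $\xi$ and match coefficients. Write $\zeta=\xi-\eta$ and $Q(\eta)=3\eta_1^2-\eta_2^2-\eta_3^2$. The explicit formulas are
\begin{eqnarray*}
\pt_{\eta_1}\phi&=&(3\zeta_1^2+\zeta_2^2+\zeta_3^2)-(3\eta_1^2+\eta_2^2+\eta_3^2),\\
\pt_{\eta_k}\phi&=&2\zeta_1\zeta_k-2\eta_1\eta_k\qquad(k=2,3).
\end{eqnarray*}
In $\phi=\xi_1|\xi|^2-\zeta_1|\zeta|^2-\eta_1|\eta|^2$ the cubic terms in $\xi$ cancel. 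Hence $\phi$ is a polynomial of degree two in $\xi$ with coefficients polynomial in $\eta$. Moreover $\phi$ and all $\pt_{\eta_j}\phi$ vanish at $\xi=0$. Each of these four functions therefore splits as $L(\xi;\eta)+R(\xi;\eta)$, with $L$ linear and $R$ quadratic in $\xi$. In $L$ the coefficients are homogeneous of degree two in $\eta$ (for $\phi$) or degree one (for $\pt_{\eta_j}\phi$); in $R$ they are of degree one (for $\phi$) or degree zero (for $\pt_{\eta_j}\phi$).

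Next I would insert the ansatz (\ref{tr})--(\ref{sr}) and compare coefficients of the monomials of degree $1$, $2$ and $3$ in $\xi$. At degree one, the $A_j$ pair with the linear parts of $\phi$ and $\pt_{\eta_j}\phi$, and the resulting coefficient must equal $\xi_1$. At degrees two and three, the $A_j$ pair with the quadratic parts and the $B_{j,k}$, $C_{j,k}$ pair with the linear and quadratic parts, and everything must cancel. This gives a finite linear system, triangular by degree, for the unknown coefficient functions of $\eta$. The free parameters $C_{2,1},C_{2,2},C_{3,1},C_{3,2}$ are there because the cubic cancellations cannot be achieved with first-order multipliers alone. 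I would solve the degree-three block first to fix the $B$'s and $C$'s up to the gauge freedom, then the degree-two block, and finally the degree-one block for the $A$'s. The natural Ansatz of putting only $\xi_2\xi_3$, $\xi_3^2$ (resp.\ $\xi_2^2$) against $\pt_{\eta_2}\phi$ (resp.\ $\pt_{\eta_3}\phi$) should make the system square. Its determinant should be a power of $Q(\eta)$; I expect this because $Q$ is the Hessian-type quantity of the symbol $\eta_1|\eta|^2$, so that the space--time resonant set is $\{Q=0\}$. Cramer's rule then expresses every coefficient as a polynomial divided by $Q^2$. In carrying out the elimination I would keep any intermediate $Q^{-1}$ factors combined so that the final denominators are exactly $Q^2$. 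I would also verify the resulting identity symbolically, since the appendix computation is long.

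The estimates then follow from homogeneity and the quotient rule. Degree counting gives the degrees of the coefficients: $A_0$ has degree $-2$, $A_j$ degree $-1$, $B_{0,k}$ degree $-3$, $B_{j,k}$ degree $-2$, $C_{j,k}$ degree $-3$. Writing each as $P(\eta)/Q(\eta)^2$ with $P$ a polynomial of the matching degree gives the $i=0$ bounds, using $|P|\lesssim|\eta|^{\deg P}$. For example, $A_0=P/Q^2$ with $\deg P=2$ yields $|A_0|\lesssim|\eta|^2Q^{-2}$. One $\eta_j$-derivative produces $\pt P/Q^2$ and $-2P\,\pt Q/Q^3$. Since $|\pt Q|\lesssim|\eta|$, these two terms give exactly the two summands in the $i=1$ bounds, e.g.\ $|\eta|^2Q^{-2}+|\eta|^4|Q|^{-3}$ for $A_j$.

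The main obstacle is the second step: actually solving the coupled system and checking that it is consistent with denominators no worse than $Q^2$. A careless choice of which monomials appear in (\ref{sr}) leads to an overdetermined or singular system, or to extra factors such as $\eta_1^{-1}$ or $Q^{-3}$. If the square Ansatz fails, I would first try using the gauge freedom (adding multiples of the syzygies among $\phi$ and the $\pt_{\eta_j}\phi$) to clear such factors, and use computer algebra to confirm the final formulas.
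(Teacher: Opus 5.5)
As written, this is a plan rather than a proof. The whole content of the lemma is that multipliers of the prescribed shape exist whose only singularity is $Q(\eta)^{-2}$, up to harmless powers of $|\eta|$. You replace this with the expectation that a coefficient system ``should'' be square with determinant a power of $Q$, and that step fails as stated.

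The multipliers $\xi_2\xi_3C_{2,1}+\xi_3^2C_{2,2}$ and $\xi_2\xi_3C_{3,1}+\xi_2^2C_{3,2}$ multiply the quadratic parts $2\xi_1\xi_2$, $2\xi_1\xi_3$ of $\pt_{\eta_2}\phi$, $\pt_{\eta_3}\phi$. So coefficient matching also has a degree-four block, which you omit; it forces $C_{3,2}=-C_{2,1}$ and $C_{3,1}=-C_{2,2}$. In total you get $3+6+10+2=21$ linear equations for $18$ unknown functions of $\eta$. The system is overdetermined, so Cramer's rule does not apply. What must be shown is precisely that the system is consistent with denominators controlled by $Q^2$, and your Hessian heuristic (indeed $\det\nabla^2(\eta_1|\eta|^2)=8\eta_1Q$) does not establish this.

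The explicit solution in the paper also has denominator $p(\eta)=|\eta|^2Q(\eta)^2$, not $Q^2$. For instance, the numerator of $A_0$, $9\eta_1^4+30\eta_1^2(\eta_2^2+\eta_3^2)+5(\eta_2^2+\eta_3^2)^2$, is not divisible by $|\eta|^2$. So your predicted shape of the answer is not what the known solution looks like.

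The missing ingredient is a mechanism that actually produces the identity; the paper's is an elimination by hand.

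\begin{enumerate}
\item Euler's relation gives $\phi-\eta\cdot\nabla_\eta\phi=(3\eta_1^2+\eta_2^2+\eta_3^2)\xi_1+2\eta_1(\eta_2\xi_2+\eta_3\xi_3)$. Hence $\eta_2\xi_2+\eta_3\xi_3$ equals $\xi_1$ times a rational function plus a combination of $\phi$ and $\nabla_\eta\phi$.
\item The combination $(\xi_3-\eta_3)\pt_{\eta_2}\phi-(\xi_2-\eta_2)\pt_{\eta_3}\phi=2\eta_1(\eta_3\xi_2-\eta_2\xi_3)$ handles the cross term.
\item Lagrange's identity then expresses $\xi_2^2+\xi_3^2$ through $\xi_1^2$.
\item Substituting into $\pt_{\eta_1}\phi$ and into $\eta_2\pt_{\eta_2}\phi+\eta_3\pt_{\eta_3}\phi$ yields two relations of the form $a_i(\eta)\xi_1^2-\eta_1^{-1}Q\,\xi_1=$ (combination of $\phi$, $\pt_{\eta_j}\phi$), with $a_1-a_2=3|\eta|^2Q/(4\eta_1^2(\eta_2^2+\eta_3^2))$.
\item Eliminating $\xi_1^2$ leaves $\frac{3|\eta|^2Q^2}{4\eta_1^3(\eta_2^2+\eta_3^2)}\xi_1$ as such a combination. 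This is exactly where $Q^2$ comes from: one factor from the common linear coefficient, one from $a_1-a_2$.
\end{enumerate}

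Expanding this gives the explicit $A_j,B_{j,k},C_{j,k}$ over $p(\eta)$, with the intermediate factors $\eta_1^{-1}$ and $(\eta_2^2+\eta_3^2)^{-1}$ cancelling. Once you have explicit formulas, your homogeneity and quotient-rule argument for the bounds is fine. Differentiating $|\eta|^{-2}$ inside $p^{-1}$ only produces terms of the first type.

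One caution: your own degree count makes $B_{0,k}$ homogeneous of degree $-3$. It therefore gives $|B_{0,k}|\lesssim|\eta|Q^{-2}$, which implies the stated $|\eta|^2Q^{-2}$ only for $|\eta|\gtrsim1$. The paper's $B_{0,1}=-4\eta_1(\eta_2^2+\eta_3^2)/p$ behaves the same way. So you should not claim that this bound follows as stated.
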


We shall prove Lemma \ref{algebra} in Appendix.

\begin{proof}[Proof of Proposition \ref{xF}.] 
By using (\ref{key2}), we split $I(f,g)$ defined by (\ref{Ifg}) into 
the following two terms:
\begin{eqnarray}
I(f,g)
&=&
\int_t^{\infty}\!\!\!\int_{\rre^3}
\psi_{\text{time}}(\xi,\eta)e^{-i\tau\phi(\xi,\eta)}
\widehat{f}(\xi-\eta)
\widehat{g}(\eta)
d\eta d\tau\label{ST}\\
& &+\int_t^{\infty}\!\!\!\int_{\rre^3}
\psi_{\text{space}}(\xi,\eta)e^{-i\tau\phi(\xi,\eta)}
\widehat{f}(\xi-\eta)\widehat{g}(\eta)
d\eta d\tau\nonumber\\
&=:&I_{\text{time}}(f,g)+I_{\text{space}}(f,g),
\nonumber
\end{eqnarray}
where $\psi_{\text{time}}$ and $\psi_{\text{space}}$ 
are given by (\ref{tr}) and (\ref{sr}), respectively.

We first evaluate $I_{\text{time}}(f,g)$. 
We treat the second term of $\psi_{\text{time}}$, 
i.e., $\xi_1B_{0,1}(\eta)\phi$ only since 
the other terms can be treated in a similar way. 
Let
\begin{eqnarray*}
I_{\text{time},2}(f,g)
:=\xi_1\int_t^{\infty}\!\!\!\int_{\rre^3}
B_{0,1}(\eta)\phi(\xi,\eta)e^{-i\tau\phi(\xi,\eta)}
\widehat{f}(\xi-\eta)
\widehat{g}(\eta)
d\eta d\tau.
\end{eqnarray*}
Integrating in $\tau$, we have
\begin{eqnarray*}
I_{\text{time},2}(f,g)&=&
i\xi_1\int_t^{\infty}\!\!\!\int_{\rre^3}
\pt_{\tau}\left\{e^{-i\tau\phi(\xi,\eta)}\right\}
B_{0,1}(\eta)\widehat{f}(\xi-\eta)
\widehat{g}(\eta)
d\eta d\tau\\
&=&i\limsup_{T\to\infty}\xi_1\int_{\rre^3}
e^{-iT\phi(\xi,\eta)}
B_{0,1}(\eta)
\widehat{f}(\xi-\eta)
\widehat{g}(\eta)
d\eta\\
& &-i\xi_1\int_{\rre^3}
e^{-it\phi(\xi,\eta)}
B_{0,1}(\eta)
\widehat{f}(\xi-\eta)
\widehat{g}(\eta)
d\eta.
\end{eqnarray*}
Hence, by the Plancherel theorem 
and noting $\xi_1=(\xi_1-\eta_1)+\eta_1$, 
we obtain
\begin{eqnarray}
\lefteqn{\left\|I_{\text{time},2}(f,g)
\right\|_{L_{\xi}^2}}
\nonumber\\
&\lesssim&
\limsup_{T\to\infty}
\left\|\xi_1
\int_{\rre^3}
{{\mathcal F}}[V(T)f](\xi-\eta)
{{\mathcal F}}\left[
B_{0,1}(-i\nabla)V(T)g\right](\eta)
d\eta
\right\|_{L_{\xi}^2}
\nonumber\\
& &
+\left\|\xi_1
\int_{\rre^3}
{{\mathcal F}}[V(t)f](\xi-\eta)
{{\mathcal F}}\left[
B_{0,1}(-i\nabla)V(t)g\right](\eta)
d\eta
\right\|_{L_{\xi}^2}
\nonumber\\
&\lesssim&
\limsup_{T\to\infty}
\|\pt_{x_1}V(T)f\|_{L_{x}^{\infty}}
\left\|
B_{0,1}(-i\nabla)V(T)g
\right\|_{L_{x}^2}
\nonumber\\
& &+
\limsup_{T\to\infty}\|V(T)f\|_{L_{x}^{\infty}}
\left\|
\pt_{x_1}B_{0,1}(-i\nabla)V(T)g
\right\|_{L_{x}^2}
\nonumber\\
& &
+\|\pt_{x_1}V(t)f\|_{L_{x}^{\infty}}
\left\|
B_{0,1}(-i\nabla)V(t)g
\right\|_{L_{x}^2}
\nonumber\\
& &+
\|V(t)f\|_{L_{x}^{\infty}}
\left\|
\pt_{x_1}B_{0,1}(-i\nabla)V(t)g
\right\|_{L_{x}^2}.
\nonumber
\end{eqnarray}
By the decay estimate (Lemma \ref{lemL} (\ref{linear1})) 
and the inequality for $B_{0,1}$ in Lemma \ref{algebra}, 
we have
\begin{eqnarray*}
\left\|I_{\text{time},2}(f,g)
\right\|_{L_{\xi}^2}
\lesssim
t^{-1-\frac{\delta}{3}}\|f\|_{Y_{\delta}}\|g\|_{Y_{\delta}}.
\nonumber
\end{eqnarray*}
In a similar way we have
\begin{eqnarray}
\left\|I_{\text{time}}(f,g)\right\|_{L_{\xi}^2}
\lesssim t^{-1-\frac{\delta}{3}}\|f\|_{Y_{\delta}}\|g\|_{Y_{\delta}}.\label{LL}
\end{eqnarray}

Next we evaluate $I_{\text{space}}(f,g)$. 
We treat the second term of $\psi_{\text{space}}$, 
i.e., $\xi_1B_{1,1}(\eta)\pt_{\eta_1}\phi$ only since 
the other terms can be treated in a similar way. 
Let 
\begin{eqnarray*}
I_{\text{space,2}}(f,g)
:=
\xi_1\int_t^{\infty}\!\!\!\int_{\rre^3}
B_{1,1}(\eta)\pt_{\eta_1}\phi(\xi,\eta)e^{-i\tau\phi(\xi,\eta)}
\widehat{f}(\xi-\eta)
\widehat{g}(\eta)
d\eta d\tau.
\end{eqnarray*}
By an integration by parts in $\eta_1$, we have
\begin{eqnarray*}
I_{\text{space,2}}(f,g)&=&
i\xi_1\int_t^{\infty}\!\!\!\int_{\rre^3}
\tau^{-1}
\pt_{\eta_1}\left\{e^{-i\tau\phi(\xi,\eta)}\right\}
B_{1,1}(\eta)
\widehat{f}(\xi-\eta)
\widehat{g}(\eta)
d\eta d\tau\\
&=&
-i\xi_1\int_t^{\infty}\!\!\!\int_{\rre^3}
\tau^{-1}
e^{-i\tau\phi(\xi,\eta)}\pt_{\eta_1}B_{1,1}(\eta)
\widehat{f}(\xi-\eta)
\widehat{g}(\eta)
d\eta d\tau\\
& &
+i\xi_1\int_t^{\infty}\!\!\!\int_{\rre^3}
\tau^{-1}
e^{-i\tau\phi(\xi,\eta)}B_{1,1}(\eta)
\pt_{\eta_1}\widehat{f}(\xi-\eta)
\widehat{g}(\eta)
d\eta d\tau\\
& &
-i\xi_1\int_t^{\infty}\!\!\!\int_{\rre^3}
\tau^{-1}
e^{-i\tau\phi(\xi,\eta)}B_{1,1}(\eta)
\widehat{f}(\xi-\eta)
\pt_{\eta_1}\widehat{g}(\eta)
d\eta d\tau.
\end{eqnarray*}
Hence, by the Plancherel theorem 
and noting $\xi_1=(\xi_1-\eta_1)+\eta_1$, 
we obtain
\begin{eqnarray}
\lefteqn{\left\|
I_{\text{space},2}(f,g)
\right\|_{L_{\xi}^2}}
\nonumber\\
&\lesssim&
\biggl\|\xi_1
\int_t^{\infty}\!\!\!\int_{\rre^3}
\tau^{-1}{{\mathcal F}}[V(t)f](\xi-\eta)
{{\mathcal F}}\left[
(\pt_{\eta_1}B_{1,1})(-i\nabla)V(t)g\right](\eta)
d\eta d\tau
\biggl\|_{L_{\xi}^2}
\nonumber\\
& &+
\left\|\xi_1
\int_t^{\infty}\!\!\!\int_{\rre^3}
\tau^{-1}
{{\mathcal F}}[V(t)x_1f](\xi-\eta)
{{\mathcal F}}\left[
B_{1,1}(-i\nabla)V(t)g\right](\eta)
d\eta d\tau
\right\|_{L_{\xi}^2}
\nonumber\\
& &+
\left\|\xi_1
\int_t^{\infty}\!\!\!\int_{\rre^3}
\tau^{-1}
{{\mathcal F}}[V(t)f](\xi-\eta)
{{\mathcal F}}\left[
B_{1,1}(-i\nabla)V(t)x_1g\right](\eta)
d\eta d\tau
\right\|_{L_{\xi}^2}.
\nonumber
\end{eqnarray}
By the decay estimate (Lemma \ref{lemL} (\ref{linear1})) 
and the inequality for $B_{1,1}$ in Lemma \ref{algebra}, 
we have
\begin{eqnarray*}
\left\|
I_{\text{space},2}(f,g)
\right\|_{L_{\xi}^2}
\lesssim
t^{-1-\frac{\delta}{3}}\|f\|_{Y_{\delta}}\|g\|_{Y_{\delta}}.
\nonumber
\end{eqnarray*}
In a similar way we have
\begin{eqnarray}
\left\|I_{\text{space}}(f,g)\right\|_{L_{\xi}^2}
\lesssim t^{-1-\frac{\delta}{3}}\|f\|_{Y_{\delta}}\|g\|_{Y_{\delta}}.
\label{LL1}
\end{eqnarray}
Combining (\ref{o1}), (\ref{Ifg}), (\ref{ST}), (\ref{LL}) and (\ref{LL1}), 
we have 
\begin{eqnarray*}
\lefteqn{\left\|\pt_{x_1}\int_t^{\infty}V(t-\tau)
\left[(V(\tau)f)(V(\tau)g)\right]d\tau\right\|_{L_x^2}}
\qquad\qquad\qquad\\
&=&
\left\|I(f,g)
\right\|_{L_x^2}\\
&\lesssim&
\left\|I_{\text{time}}(f,g)\right\|_{L_{\xi}^2}
+\left\|I_{\text{space}}(f,g)\right\|_{L_{\xi}^2}\\
&\lesssim& t^{-1-\frac{\delta}{3}}\|f\|_{Y_{\delta}}\|g\|_{Y_{\delta}}.
\end{eqnarray*}
Hence we obtain (\ref{xFe}). 
This completes the proof of Proposition \ref{xF}. 
\end{proof}

\begin{lem}\label{cor:v2}
Let $0<\delta<1$ and 
let $u_1$ and $u_2$ be given by (\ref{v1}) and (\ref{v2}). 
Then for any $t>0$, we have
\begin{eqnarray}
\|u_1(t)\|_{W_x^{4,\infty}}
&\lesssim&t^{-1-\frac{\delta}{3}}\|u_+\|_{X_{\delta}},
\label{v11}\\
\|u_2(t)\|_{H_x^4}&\lesssim&
t^{-1-\frac{\delta}{3}}\|u_+\|_{X_{\delta}}^2,
\label{v22}
\end{eqnarray}
where the semi-norm $\|\cdot\|_{X_{\delta}}$ is given by (\ref{normX}). 
\end{lem}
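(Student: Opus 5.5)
The two bounds are proved separately. The first, (\ref{v11}), follows from the linear decay estimate. The second, (\ref{v22}), is obtained by applying Proposition \ref{xF} to derivatives of $u_+$ and checking that $\|\cdot\|_{X_\delta}$ dominates the $Y_\delta$ norms that arise.

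For (\ref{v11}) we apply Lemma \ref{lemL} (\ref{linear1}) with $b=1$ (so $q=\infty$, $q'=1$) and $a=\delta$. This gives
\[
\|\langle\nabla\rangle^{4}V(t)u_+\|_{L_x^\infty}
=\||\pt_{x_1}|^{\delta}V(t)\,|\pt_{x_1}|^{-\delta}\langle\nabla\rangle^{4}u_+\|_{L_x^\infty}
\lesssim t^{-1-\frac{\delta}{3}}\||\pt_{x_1}|^{-\delta}u_+\|_{W_x^{4,1}}.
\]
Here we used that $\langle\nabla\rangle^4$ and $|\pt_{x_1}|^{\pm\delta}$ commute with $V(t)$. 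Strictly, $W^{4,\infty}$ is defined through $\langle\nabla\rangle^4$ in $L^\infty$. One can either use this definition directly as above, or replace it by $\sum_{|\beta|\le4}\|\pt^\beta\cdot\|_{L^\infty}$ and apply the same argument to each $\pt^\beta u_+$. Finally, $\||\pt_{x_1}|^{-\delta}u_+\|_{W^{4,1}}\le\||\pt_{x_1}|^{-\delta}\langle x\rangle u_+\|_{W^{6,1}}$. This is immediate for the weightless quantity once one notes $\|h\|_{L^1}\le\|\langle x\rangle h\|_{L^1}$. The weight $\langle x\rangle$ does not commute with $|\pt_{x_1}|^{-\delta}$, but the commutator only produces terms of the form $|\pt_{x_1}|^{-\delta-1}$ applied to $u_+$. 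Alternatively, we simply bound the weightless $W^{4,1}$ norm by the $X_\delta$ norm, since $\langle x\rangle\ge1$ and the Schwartz assumption makes every term finite. I would state this comparison as the mild requirement that $X_\delta$ controls it.

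For (\ref{v22}) we write $\|u_2(t)\|_{H^4}\lesssim\sum_{|\beta|\le4}\|\pt^\beta u_2(t)\|_{L^2}$. By Leibniz's rule applied to $(V(\tau)u_+)^2$, and since $\pt^\beta$ commutes with $V$, we get
\[
\pt^\beta u_2=-\sum_{\beta_1+\beta_2=\beta}c_{\beta_1\beta_2}\,\pt_{x_1}\int_t^\infty V(t-\tau)\big[(V(\tau)\pt^{\beta_1}u_+)(V(\tau)\pt^{\beta_2}u_+)\big]d\tau .
\]
Proposition \ref{xF} with $f=\pt^{\beta_1}u_+$ and $g=\pt^{\beta_2}u_+$ then gives the bound $t^{-1-\delta/3}\|\pt^{\beta_1}u_+\|_{Y_\delta}\|\pt^{\beta_2}u_+\|_{Y_\delta}$. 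It remains to show $\|\pt^\gamma u_+\|_{Y_\delta}\lesssim\|u_+\|_{X_\delta}$ for $|\gamma|\le4$.

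This comparison is the only real point to check, and it is where the index gap between $X_\delta$ (regularity indices $6,7,8$) and $Y_\delta$ (indices $2,3,4$) is used. We commute $\langle x\rangle$ past $\pt^\gamma$. The commutator $[\langle x\rangle,\pt^\gamma]$ consists of lower-order derivatives multiplied by bounded smooth functions, namely derivatives of $\langle x\rangle$. Hence
\[
\|P(-i\nabla)\langle x\rangle\pt^\gamma u_+\|\lesssim\sum_{\gamma'\le\gamma}\|\pt^{\gamma'}P(-i\nabla)\langle x\rangle u_+\|+\text{lower-order terms}
\]
for the Fourier multipliers $P=|\xi_1|^{-\delta}$ and $(3\xi_1^2-\xi_2^2-\xi_3^2)^{-2}$. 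Strictly, $P$ does not commute with multiplication by smooth functions either. For the pure $\langle x\rangle$ weight, however, one can write $\langle x\rangle h=\langle x\rangle^{-1}h+\sum_j x_j\langle x\rangle^{-1}x_jh$, and $x_j$ acts as $i\pt_{\xi_j}$ on the Fourier side. The resulting terms differentiate $P$, producing more singular multipliers. I expect this to be the main obstacle, and I would handle it by a direct Fourier-side computation using the extra smoothness margin in $X_\delta$. Granting it, $\pt^\gamma$ with $|\gamma|\le4$ raises $W^{2,1}$ to $W^{6,1}$, $H^3$ to $H^7$, and $H^4$ to $H^8$, which exactly matches the definition of $\|\cdot\|_{X_\delta}$. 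Summing over $\beta$ yields (\ref{v22}).
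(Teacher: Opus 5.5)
This is essentially the paper's proof: (\ref{v11}) is (\ref{linear1}) with $b=1$, $a=\delta$, and (\ref{v22}) is the Leibniz rule for the bilinear operator plus Proposition~\ref{xF}. The paper expands $\Delta^2$ of the bilinear operator rather than all $\pt^\beta$, and the four derivatives exactly use up the index shift $2,3,4\to6,7,8$ from $Y_\delta$ to $X_\delta$.

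The comparison $\|\pt^\gamma u_+\|_{Y_\delta}+\||\pt_{x_1}|^{-\delta}u_+\|_{W_x^{4,1}}\lesssim\|u_+\|_{X_\delta}$ that you flag is likewise taken for granted in the paper. However, your proposed justifications do not deliver it: commuting a weight past $|\pt_{x_1}|^{-\delta}$ or $(3\pt_{x_1}^2-\pt_{x_2}^2-\pt_{x_3}^2)^{-2}$ gives \emph{more} singular multipliers, $\langle x\rangle\ge1$ cannot be used inside $|\pt_{x_1}|^{-\delta}$, and no smoothness is left over. The clean route is to read $\langle x\rangle$ as the weights $1,x_1,x_2,x_3$, which is how it is actually used in the proof of Proposition~\ref{xF} via $\pt_{\eta_j}\widehat f$. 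Then use $P(-i\nabla)x_j\pt^\gamma u_+=\pt^\gamma P(-i\nabla)(x_ju_+)-\gamma_j\pt^{\gamma-e_j}P(-i\nabla)u_+$ for each multiplier $P$ in the norms, with the weight-$1$ parts of $X_\delta$ controlling the unweighted terms.
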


\begin{proof}[Proof of Lemma \ref{cor:v2}] 
By Lemma \ref{lemL} (\ref{linear1}), we have
\begin{eqnarray*}
\|u_1(t)\|_{W_x^{4,\infty}}
=\|V(t)u_+\|_{W_x^{4,\infty}}
\lesssim t^{-1-\frac{\delta}{3}}
\||\pt_{x_1}|^{-\delta}u_+\|_{W_x^{4,1}},
\end{eqnarray*}
which yields (\ref{v11}).

To show (\ref{v22}), we note
\begin{eqnarray*}
\|u_2(t)\|_{H_x^4}
\sim
\|u_2(t)\|_{L_x^2}+\|\Delta^2 u_2(t)\|_{L_x^2}.
\end{eqnarray*}
We define
\begin{eqnarray*}
B(f,g):=\pt_{x_1}\int_t^{\infty}V(t-\tau)
\left[(V(\tau)f)(V(\tau)g)\right]d\tau.
\end{eqnarray*}
Since $\xi=(\xi-\eta)+\eta$, we see
\begin{eqnarray*} 
\nabla B(f,g)
=B(\nabla f,g)+B(f,\nabla g).
\end{eqnarray*} 
In a similar way, using the identity 
$|\xi|^2=|\xi-\eta|^2+2(\xi-\eta)\cdot\eta+|\eta|^2$, 
we have
\begin{eqnarray*}
\Delta B(f,g)=B(\Delta f,g)+
2\sum_{j=1}^3 B(\pt_{x_j} f,\pt_{x_j} g)+B(f,\Delta g).
\end{eqnarray*}
Therefore
\begin{eqnarray*}
\Delta^2 B(f,g)&=&
\Delta B(\Delta f,g)+2\sum_{j=1}^3\Delta B(\pt_{x_j} f,\pt_{x_j} g)
+\Delta B(f,\Delta g)\\
&=&
B(\Delta^2 f,g)+4\sum_{j=1}^3 B(\pt_{x_j} \Delta f,\pt_{x_j} g)\\
& &
+2B(\Delta f,\Delta g)+4\sum_{j=1}^3\sum_{k=1}^3B(\pt_{x_j}\pt_{x_k} f,\pt_{x_j}\pt_{x_k} g)\\
& &
+4\sum_{j=1}^3 B(\pt_{x_j} f,\pt_{x_j} \Delta g)+B(f,\Delta^2 g).
\end{eqnarray*}
Combining Proposition \ref{xF} (\ref{xFe}) with the above 
identities, we have (\ref{v22}).
\end{proof}

%
%
\section{Proof of Theorem \ref{main}}

In this section we complete the proof of Theorem \ref{main}.
To prove Theorem \ref{main}, we show the existence 
of solution $w$ to (\ref{ZK11}) with $w\to0$ in $H^3(\rre^3)$ 
as $t\to\infty$. 

Let 
\begin{eqnarray*}
N(w,u_1,u_2)
&:=&\pt_{x_1}(w^2)
+2\pt_{x_1}\left\{(u_1+u_2)w\right\}+\pt_{x_1}(2u_1u_2+u_2^2).
\end{eqnarray*}
Then (\ref{ZK11}) can be rewritten as 
\begin{eqnarray} 
\partial_tw+\pt_{x_1}\Delta w
=N(w,u_1,u_2).
\label{ZK12}
\end{eqnarray}
To show the existence of solution $w$ to (\ref{ZK11}) 
with $w\to0$ in $H^3(\rre^3)$, we consider 
the regularized equation associated with (\ref{ZK12}) : 
\begin{eqnarray}
\lefteqn{\partial_tw_{\lambda,\mu}
+\pt_{x_1}\Delta w_{\lambda,\mu}}\qquad\label{regular}\\
&=&(1+\lambda t)^{-5}\rho_{\mu}\ast 
N(\rho_{\mu}\ast w,\rho_{\mu}\ast u_1,\rho_{\mu}\ast u_2),
\nonumber
\end{eqnarray}
where $0<\lambda,\mu<1$, 
$\rho\in C_0^{\infty}(\rre^3)$ satisfies $\rho\ge0$ 
and $\int\rho(x)dx=1$, and $\rho_{\mu}(x)=
\mu^{-3}\rho(x/\mu)$. 

Thanks to the regularizing factor $\rho_{\mu}\ast$ 
and the time decaying factor $(1+\lambda t)^{-5}$, by using the contraction 
mapping principle, we easily see that for any 
$0<\lambda<1$ and $0<\mu<1$, there exists a 
$T_{\lambda,\mu}>0$ such that (\ref{regular}) has a 
unique solution $w_{\lambda,\mu}$ satisfying 
\begin{eqnarray*}
& &w_{\lambda,\mu}\in\bigcap_{j=1}^{\infty}
C^1([T_{\lambda,\mu},\infty),H_{x}^j),\\
& &\sup_{t\ge T_{\lambda,\mu}}(1+\lambda t)^4\sum_{3i+j\le 3}
\|\pt_t^i\nabla_{x}^jw_{\lambda,\mu}(t)\|_{L_{x}^2}<+\infty.
\end{eqnarray*}
Again using the regularizing and time decaying factors, the above 
solution $w_{\lambda,\mu}$ can be extend to $[0,\infty)$ 
without the smallness assumption on $u_+$.

We next derive an a priori estimates for $w_{\lambda,\mu}$ 
independent of $\lambda$ and $\mu$ under the assumption 
that $r:=\|u_+\|_{X_{\delta}}<\infty$,  where $\|\cdot\|_{X_{\delta}}$ 
is defined by (\ref{normX}). 
We abbreviate $w_{\lambda,\mu}$ to $w$. Let 
\begin{eqnarray*}
\|w\|_{Z_T}
:=\sup_{t\in[T,\infty)}t^{\alpha}
\left(\|w(t)\|_{H_{x}^3}
+\||\pt_{x_1}|^{\frac{\nu}{2}}w(\tau)
\|_{L_{\tau}^{\frac{6}{3-2\nu}}(t,\infty;W_{x}^{2,\frac{2}{\nu}})}
\right),
\end{eqnarray*}
where $\alpha>0$, $T>0$ and $0<\nu<1/2$ are fixed later. 

We first derive the estimates for $w$ in $H_{x}^3$. 

\begin{lem}\label{e1} 
Let $w$ be a solution to (\ref{regular}). 
Then we have
\begin{eqnarray}
\lefteqn{\sup_{t\in[T,\infty)}t^{\alpha}\|w(t)\|_{H_{x}^3}}
\label{5.12}\\
&\lesssim& 
(r^3+r^4)T^{\alpha-\frac{2}{3}\delta-1}
+(r+r^4)T^{-\frac{\delta}{3}}\|w\|_{Z_T}
+T^{-\alpha+\frac{\nu}{3}+\frac12}\|w\|_{Z_T}^2
\nonumber\\
& &
+(r+r^2)T^{-\alpha-\frac{\delta}{3}}\|w\|_{Z_T}^2
+T^{-2\alpha+1}\|w\|_{Z_T}^3
\nonumber\\
& &
+(r^3+r^4)T^{-11\alpha-\frac{2}{3}\delta-1}\|w\|_{Z_T}^{12}
+(r+r^2)T^{-12\alpha-\frac{\delta}{3}}\|w\|_{Z_T}^{13}, 
\nonumber
\end{eqnarray}
where the implicit constants are independent of 
$\lambda$ and $\mu$.
\end{lem}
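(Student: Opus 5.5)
We prove (\ref{5.12}) with an energy estimate in $H_x^3$, integrated backward from $t=\infty$. This is legitimate because $w_{\lambda,\mu}(t)\to0$ in $H^3$ as $t\to\infty$, thanks to the $(1+\lambda t)^{-4}$ decay. Apply $\pt^\beta$ with $|\beta|\le3$ to (\ref{regular}), pair with $\pt^\beta w$ in $L^2$, and use the skew-adjointness of $\pt_{x_1}\Delta$. The dispersive term then drops out, and we get
\begin{eqnarray*}
\|w(t)\|_{H_x^3}^2\lesssim\int_t^\infty\Bigl|\sum_{|\beta|\le3}\bigl(\pt^\beta\rho_\mu\ast N,\pt^\beta w\bigr)_{L^2}\Bigr|\,d\tau .
\end{eqnarray*}
The factor $(1+\lambda\tau)^{-5}\le1$ and convolution with $\rho_\mu$ are harmless: we move $\rho_\mu\ast$ onto $w$ and use $\|\rho_\mu\ast f\|_{L^q}\le\|f\|_{L^q}$. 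Everything is therefore uniform in $\lambda,\mu$. We treat the three parts of $N$ separately.

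\emph{Quadratic term $\pt_{x_1}(w^2)$.} After integrating by parts (Kato-type commutator), all terms are bounded by $\|\nabla w\|_{L^\infty}\|w\|_{H^3}^2$ plus lower-order pieces. Here the Strichartz component of $Z_T$ is used. By H\"older in time, the $L^\infty$-type norm $\||\pt_{x_1}|^{\nu/2}w\|_{W^{2,2/\nu}}$, lying in $L_\tau^{6/(3-2\nu)}$, together with $\|w\|_{H^3}^2\le \tau^{-2\alpha}\|w\|_{Z_T}^2$, gives roughly $t^{-3\alpha+\nu/3+1/2}\|w\|_{Z_T}^3$. To pass from the $W^{2,2/\nu}$ norm to $L^\infty$ we use the Sobolev embedding with the remaining derivative taken from $H^3$ by interpolation; the $|\pt_{x_1}|^{\nu/2}$ weight is handled on the frequency side. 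Dividing by $\|w(t)\|_{H^3}$ and multiplying by $t^\alpha$ gives the term $T^{-\alpha+\nu/3+1/2}\|w\|_{Z_T}^2$.

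\emph{Linear term $2\pt_{x_1}\{(u_1+u_2)w\}$.} The $u_1$ contribution is handled by (\ref{v11}): $\|u_1\|_{W^{4,\infty}}\lesssim \tau^{-1-\delta/3}r$. Commutator estimates then give $\int_t^\infty \tau^{-1-\delta/3} r\|w\|_{H^3}^2\,d\tau$, which yields $rT^{-\delta/3}\|w\|_{Z_T}$. For $u_2$ we only have $H^4$ control from (\ref{v22}). We use Sobolev embedding $H^4\subset W^{2,\infty}$ in three dimensions when $u_2$ carries at most two derivatives. When $u_2$ carries three or four derivatives, we pair $\|u_2\|_{H^4}$ with $\|w\|_{L^\infty}$ or $\|\nabla w\|_{L^\infty}$, which are bounded through $\|w\|_{H^3}$. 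This gives the $r^2$-type contributions; the terms with higher powers $r^4$ and higher powers of $\|w\|_{Z_T}$ arise when the crude bounds are combined via Young's inequality.

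\emph{Source term $\pt_{x_1}(2u_1u_2+u_2^2)$.} This is the main obstacle and determines the $T^{\alpha-\frac23\delta-1}$ rate. We bound it by $\|\pt_{x_1}(u_1u_2)\|_{H^3}\|w\|_{H^3}$, using $\|u_1\|_{W^{4,\infty}}\|u_2\|_{H^4}\lesssim \tau^{-2-\frac23\delta}r^3$ and $\|u_2\|_{W^{4,\infty}}$-type products giving $r^4$. The difficulty is that there is one more derivative on $u_2$ than (\ref{v22}) provides at $H^4$. We would handle it by applying (\ref{v22}) at one higher regularity: $X_\delta$ carries $W^{6,1}$ and $H^7,H^8$, so Proposition \ref{xF} applied to derivatives of $u_+$, together with the Leibniz identities from the proof of Lemma \ref{cor:v2}, gives $\|u_2\|_{H^5}\lesssim \tau^{-1-\delta/3}r^2$. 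Integrating $\tau^{-2-\frac23\delta}\tau^{-\alpha}\|w\|_{Z_T}$ and using Young's inequality $ab\le \frac12a^2+\frac12b^2$ to absorb, the remaining cross terms in $r$ and $\|w\|_{Z_T}$ produce the high powers $\|w\|_{Z_T}^{12},\|w\|_{Z_T}^{13}$ listed in (\ref{5.12}). Collecting all terms and taking $\sup_{t\ge T}t^\alpha$ gives the lemma.
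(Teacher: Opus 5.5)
There is a genuine gap, at the self-interaction term $\pt_{x_1}(w^2)$. Your backward energy setup and your handling of the $u_1$, $u_2$ terms are in line with the paper, and the commutator bound by $\|\nabla w\|_{L^\infty}\|w\|_{H^3}^2$ is fine. The problem is that the Strichartz part of $\|w\|_{Z_T}$ only contains $S(\tau):=\||\pt_{x_1}|^{\frac{\nu}{2}}w(\tau)\|_{W^{2,2/\nu}}$.

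Since $|\xi_1|\le|\xi_1|^{\frac{\nu}{2}}\langle\xi\rangle^{1-\frac{\nu}{2}}$, $S$ controls $\|\pt_{x_1}w\|_{L^\infty}$. It does not control $\|\pt_{x_2}w\|_{L^\infty}$ or $\|\pt_{x_3}w\|_{L^\infty}$. At low $\xi_1$-frequencies, $\||\pt_{x_1}|^{\frac{\nu}{2}}f\|_{L^{2/\nu}}$ is invariant under $x_1\mapsto x_1/L$. Exactly as $\dot W^{1/p,p}(\rre)\not\subset L^\infty(\rre)$, it gives no $L^\infty$ bound. So \emph{handling the weight on the frequency side} is precisely the step that fails.

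Borrowing the missing control from $H^3$ does not give (\ref{5.12}) either:
\begin{itemize}
\item The plain bound $\|\nabla w\|_{L^\infty}\lesssim\|w\|_{H^3}$ leads to $t^{2\alpha}\|w(t)\|_{H^3}^2\lesssim t^{1-\alpha}\|w\|_{Z_T}^3$. This is useless for $\frac12<\alpha<1$, the range chosen in the proof of Theorem \ref{main}.
\item A dyadic splitting in $\xi_1$, using $H^3$ only on $|\xi_1|\lesssim 2^{-K}$, gives only $\|\nabla w\|_{L^\infty}\lesssim S\log(2+\|w\|_{H^3}/S)$. After time integration this costs a logarithmic (or $T^{\epsilon}$) loss against the term $T^{-\alpha+\frac{\nu}{3}+\frac12}\|w\|_{Z_T}^2$.
\end{itemize}
That logarithmic variant would still close the bootstrap, because $\alpha>\frac{\nu}{3}+\frac12$ is strict. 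But it is a different estimate from (\ref{5.12}), and your proposal does not contain it.

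The idea you are missing is the paper's modified energy, which removes the transverse gradient algebraically instead of estimating it. Expanding $I_1=\int\nabla\Delta\pt_{x_1}(w^2)\cdot\nabla\Delta w\,dx$ gives
$6\int\nabla w\cdot\nabla\Delta w\,\pt_{x_1}\Delta w\,dx+\int\pt_{x_1}w|\nabla\Delta w|^2dx+R_1$, with $R_1\lesssim\|\pt_{x_1}w\|_{W^{1,2/\nu}}\|w\|_{H^3}^2$. The first term is the only one carrying the full gradient $\nabla w$ in $L^\infty$. The paper also computes
$\frac{d}{dt}\int w(\Delta w)^2dx=-2\int\nabla w\cdot\nabla\Delta w\,\pt_{x_1}\Delta w\,dx-\int\pt_{x_1}w|\nabla\Delta w|^2dx+R_2$, with $R_2$ as in (\ref{R4}). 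Hence the cubic part of $\frac{d}{dt}\bigl(\|w\|_{H^3}^2+6\int w(\Delta w)^2dx\bigr)$ is $-4\int\pt_{x_1}w|\nabla\Delta w|^2dx+2R_1$. This involves only $\pt_{x_1}w$, and it yields $T^{-\alpha+\frac{\nu}{3}+\frac12}\|w\|_{Z_T}^2$ with no loss.

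Since (\ref{5.12}) is proved without any smallness of $w$, this cubic correction is not dominated by $\|w\|_{H^3}^2$. The paper restores coercivity by adding $M\|w\|_{L^2}^{14}$, using $\|\Delta w\|_{L^4}\lesssim\|w\|_{L^2}^{1/12}\|w\|_{H^3}^{11/12}$ and Young's inequality. The evolution of $\|w\|_{L^2}^{14}$, through (\ref{l2}), is what produces the $\|w\|_{Z_T}^{12}$ and $\|w\|_{Z_T}^{13}$ terms, not Young's inequality on cross terms.

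Two smaller corrections:
\begin{itemize}
\item The source term has no derivative deficit. Indeed $\|\pt_{x_1}(u_1u_2)\|_{H^3}\le\|u_1u_2\|_{H^4}\lesssim\|u_1\|_{W^{4,\infty}}\|u_2\|_{H^4}$ and $\|u_2^2\|_{H^4}\lesssim\|u_2\|_{H^4}^2$, so (\ref{v11}) and (\ref{v22}) suffice.
\item The $H^5$ bound for $u_2$ you invoke is not available anyway. Proposition \ref{xF} applied to fifth derivatives of $u_+$ would need $W^{7,1}$, $H^8$, $H^9$ in the three components of the norm, one derivative more than $X_\delta$ provides.
\end{itemize}
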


\begin{proof}[Proof of Lemma \ref{e1}.] 
The proof is based on the energy method. 
Although we derive (\ref{5.12}) for smooth solution 
to (\ref{ZK12}), the proof for (\ref{ZK12}) below 
works for (\ref{regular}). 

Let $w$ be a smooth solution to (\ref{ZK12}). 
Taking the inner product in $L_x^2$ 
between (\ref{ZK12}) and $w$ and 
integrating by parts, we have
\begin{eqnarray}
\frac12\frac{d}{dt}\|w\|_{L_x^2}^2&=&
\int_{\rre^3}\pt_{x_1}(w^2)wdx
+2\int_{\rre^3}\pt_{x_1}\{(u_1+u_2)w\}wdx
\label{l1}\\
& &+\int_{\rre^3}\pt_{x_1}(2u_1u_2+u_2^2)wdx
\nonumber\\
&=&
-\int_{\rre^3}w^2\pt_{x_1}wdx
-2\int_{\rre^3}(u_1+u_2)w\pt_{x_1}wdx
\nonumber\\
& &
+\int_{\rre^3}\pt_{x_1}(2u_1u_2+u_2^2)wdx
\nonumber\\
&=&
\int_{\rre^3}\pt_{x_1}(u_1+u_2)w^2dx
+\int_{\rre^3}\pt_{x_1}(2u_1u_2+u_2^2)wdx
\nonumber\\
&\lesssim&
(\|u_1\|_{W_x^{1,\infty}}+\|u_2\|_{H_x^3})
\|w\|_{L_x^2}^2
\nonumber\\
& &+(\|u_1\|_{W_x^{1,\infty}}+\|u_2\|_{H_x^3})\|u_2\|_{H_x^3}
\|w\|_{L_x^2}.
\nonumber
\end{eqnarray}
Hence
\begin{eqnarray}
\frac{d}{dt}\|w\|_{L_x^2}^{14}
&=&7\|w\|_{L_x^2}^{12}\cdot\frac{d}{dt}\|w\|_{L_x^2}^2
\label{l2}\\
&\lesssim&
(\|u_1\|_{W_x^{1,\infty}}+\|u_2\|_{H_x^3})
\|w\|_{L_x^2}^{14}
\nonumber\\
& &+(\|u_1\|_{W_x^{1,\infty}}+\|u_2\|_{H_x^3})\|u_2\|_{H_x^3}
\|w\|_{L_x^2}^{13}.
\nonumber
\end{eqnarray}
Applying $\nabla\Delta$ to (\ref{ZK12}) and 
taking the inner product in $L_{x}^{2}$ between the resulting
equation and $\nabla\Delta w$, we obtain
\begin{eqnarray}
\frac12\frac{d}{dt}
\|\nabla\Delta w\|_{L_x^2}^2
&=&
\int_{\rre^3}\nabla\Delta\pt_{x_1}(w^2)\cdot
\nabla\Delta wdx\label{l3}\\
& &+2
\int_{\rre^3}\nabla\Delta\pt_{x_1}\{(u_1+u_2)w\}\cdot
\nabla\Delta wdx\nonumber\\
& &+\int_{\rre^3}\nabla\Delta\pt_{x_1}(2u_1u_2+u_2^2)\cdot
\nabla\Delta wdx\nonumber\\
&=:&I_1+I_2+I_3.\nonumber
\end{eqnarray}
For $I_1$, by integration by parts, we have
\begin{eqnarray*}
I_1&=&
-\int_{\rre^3}\nabla\Delta(w^2)\cdot\pt_{x_1}\nabla\Delta wdx\\
&=&
-\sum_{j=1}^3\int_{\rre^3}\pt_{x_j}\Delta(w^2)
\cdot\pt_{x_1}\pt_{x_j}\Delta wdx\\
&=&
-4\sum_{j=1}^3\int_{\rre^3}\nabla w\cdot\pt_{x_j}\nabla w
\pt_{x_1}\pt_{x_j}\Delta wdx
-2\sum_{j=1}^3\int_{\rre^3}\pt_{x_j}w\Delta w\pt_{x_1}\pt_{x_j}\Delta wdx\\
& &
-2\sum_{j=1}^3\int_{\rre^3}w\pt_{x_j}\Delta w\pt_{x_1}\pt_{x_j}\Delta wdx.
\end{eqnarray*}
By integration by parts again, we obtain
\begin{eqnarray}
I_1&=&
4\sum_{j=1}^3\int_{\rre^3}|\pt_{x_j}\nabla w|^2
\pt_{x_1}\Delta wdx
+4\int_{\rre^3}\nabla w\cdot\nabla\Delta w
\pt_{x_1}\Delta wdx
\label{l4}\\
& &
+2\int_{\rre^3}\nabla w\cdot\nabla\Delta w\pt_{x_1}\Delta wdx
+\int_{\rre^3}\pt_{x_1}w|\nabla\Delta w|^2dx
\nonumber\\
&=&
6\int_{\rre^3}\nabla w\cdot\nabla\Delta w\pt_{x_1}\Delta wdx
+\int_{\rre^3}\pt_{x_1}w|\nabla\Delta w|^2dx
+R_1,
\nonumber
\end{eqnarray}
where 
\begin{eqnarray}
R_1
&=&4\sum_{j=1}^3\int_{\rre^3}|\pt_{x_j}\nabla w|^2
\pt_{x_1}\Delta wdx
\label{R1}\\
&=&-8\sum_{j,k=1}^3\int_{\rre^3}\pt_{x_j}\nabla w
\cdot \pt_{x_j}\pt_{x_k}\nabla w
\pt_{x_1}\pt_{x_k}wdx
\nonumber\\
&\lesssim&\|\pt_{x_1}w\|_{W_x^{1,\frac{2}{\nu}}}\|w\|_{H_x^3}^2.
\nonumber
\end{eqnarray}
In a similar way, we see
\begin{eqnarray}
|I_2|&\lesssim&(\|u_1\|_{W_x^{4,\infty}}+\|u_2\|_{H_x^4})\|w\|_{H_x^3}^2,
\label{R2}\\
|I_3|&\lesssim&(\|u_1\|_{W_x^{4,\infty}}+\|u_2\|_{H_x^4})\|u_2\|_{H_x^4}\|w\|_{H_x^3}.
\label{R3}
\end{eqnarray}
On the other hand
\begin{eqnarray*}
\frac{d}{dt}\int_{\rre^3}w(\Delta w)^2dx&=&
\int_{\rre^3}\pt_tw(\Delta w)^2dx
+2\int_{\rre^3}w\Delta w\pt_t\Delta wdx\\
&=&-2\int_{\rre^3}w\Delta w\pt_{x_1}\Delta^2 wdx+R_2,
\end{eqnarray*}
where
\begin{eqnarray}
R_2
&=&
-\int_{\rre^3}\pt_{x_1}\Delta w(\Delta w)^2dx
+\int_{\rre^3}N(w,u_1,u_2)(\Delta w)^2dx
\label{R4}\\
& &
+2\int_{\rre^3}w\Delta w\Delta N(w,u_1,u_2)dx
\nonumber\\
&\lesssim&\|w\|_{H_x^3}^4
+(\|u_1\|_{W_x^{3,\infty}}+\|u_2\|_{H_x^3})\|w\|_{H_x^3}^3
\nonumber\\
& &+(\|u_1\|_{W_x^{3,\infty}}+\|u_2\|_{H_x^3})\|u_2\|_{H_x^3}\|w\|_{H_x^3}^2.
\nonumber
\end{eqnarray}
By integration by parts, we obtain
\begin{eqnarray}
\lefteqn{\frac{d}{dt}\int_{\rre^3}w(\Delta w)^2dx}
\label{l5}\\
&=&
2\int_{\rre^3}\nabla w\Delta w\cdot \pt_{x_1}\nabla\Delta wdx
+2\int_{\rre^3}w\nabla\Delta w\cdot \pt_{x_1}\nabla\Delta wdx
\nonumber\\
& &+R_2
\nonumber\\
&=&
-2\int_{\rre^3}\nabla w\cdot\nabla\Delta w\pt_{x_1}\Delta wdx
-\int_{\rre^3}\pt_{x_1}w|\nabla\Delta w|^2dx
+R_2.
\nonumber
\end{eqnarray}
Hence, from (\ref{l3}), (\ref{l4}) and (\ref{l5}), 
we see that for any $M>0$, 
\begin{eqnarray*}
\lefteqn{
\frac{d}{dt}\left(\|w(t)\|_{H_{x}^3}^2
+6\int_{\rre^3}w(\Delta w)^2dx+M\|w\|_{L_x^2}^{14}\right)}\qquad\\
&=&-4\int_{\rre^3}\pt_{x_1}w|\nabla\Delta w|^2dx
+\frac{d}{dt}\|w\|_{L_x^2}^2+M\frac{d}{dt}\|w\|_{L_x^2}^{14}\\
& &+2I_2+2I_3+2R_1+R_2.
\end{eqnarray*}
By (\ref{l1}), (\ref{l2}), (\ref{R1}), (\ref{R2}), (\ref{R3}) and (\ref{R4}), we have
\begin{eqnarray*}
\lefteqn{
\|w(t)\|_{H_{x}^3}^2
+6\int_{\rre^3}w(\Delta w)^2dx+M\|w\|_{L_x^2}^{14}}\qquad\\
&\lesssim&
\int_t^{+\infty}
(\|u_1(\tau)\|_{W_{x}^{4,\infty}}
+\|u_2(\tau)\|_{H_x^4})\|u_2(\tau)\|_{H_x^4}\|w(\tau)\|_{H_{x}^3}
d\tau\\
& &+\int_t^{+\infty}
(\|u_1(\tau)\|_{W_{x}^{4,\infty}}
+\|u_2(\tau)\|_{H_x^4})
\|w(\tau)\|_{H_{x}^3}^2d\tau\\
& &+\int_t^{+\infty}
(\|u_1(\tau)\|_{W_{x}^{3,\infty}}
+\|u_2(\tau)\|_{H_x^3})\|u_2(\tau)\|_{H_x^3}
\|w(\tau)\|_{H_{x}^3}^2d\tau\\
& &+\int_t^{+\infty}
(\|\pt_{x_1}w(\tau)\|_{L_{x}^{\infty}}
+\|\pt_{x_1}w(\tau)\|_{W_{x}^{1,\frac{2}{\nu}}})
\|w(\tau)\|_{H_{x}^3}^2d\tau\\
& &+\int_t^{+\infty}
(\|u_1(\tau)\|_{W_x^{3,\infty}}+\|u_2(\tau)\|_{H_x^3})\|w(\tau)\|_{H_x^3}^3
d\tau\\
& &+\int_t^{+\infty}
\|w(\tau)\|_{H_x^3}^4
d\tau\\
& &+\int_t^{+\infty}(\|u_1(\tau)\|_{W_x^{1,\infty}}+\|u_2(\tau)\|_{H_x^3})
\|u_2(\tau)\|_{H_x^3}\|w(\tau)\|_{L_x^2}^{13}d\tau\\
& &+\int_t^{+\infty}(\|u_1(\tau)\|_{W_x^{1,\infty}}+\|u_2(\tau)\|_{H_x^3})
\|w(\tau)\|_{L_x^2}^{14}d\tau.
\nonumber
\end{eqnarray*} 
By the Sobolev embedding, we find
\begin{eqnarray*}
\|\pt_{x_1}w\|_{L_{x}^{\infty}}
\lesssim
\|\pt_{x_1}w\|_{W_{x}^{2\nu,\frac{2}{\nu}}}
\lesssim
\||\pt_{x_1}|^{\frac{\nu}{2}}w
\|_{W_{x}^{1+\frac32\nu,\frac{2}{\nu}}}
\lesssim
\||\pt_{x_1}|^{\frac{\nu}{2}}w
\|_{W_{x}^{2,\frac{2}{\nu}}}.
\end{eqnarray*}
Hence the H\"older inequality and Lemma \ref{cor:v2} yield 
\begin{eqnarray}
\lefteqn{\|w(t)\|_{H_{x}^3}^2
+6\int_{\rre^3}w(\Delta w)^2dx+M\|w\|_{L_x^2}^{14}}
\nonumber\\
&\lesssim&
(r^3+r^4)\int_t^{+\infty}\tau^{-2-\frac{2}{3}\delta}
\|w(\tau)\|_{H_{x}^3}d\tau
+
(r+r^4)\int_t^{+\infty}\tau^{-1-\frac{\delta}{3}}
\|w(\tau)\|_{H_{x}^3}^2d\tau
\nonumber\\
& &+\|w(\tau)\|_{L_{\tau}^{\frac{6}{3+2\nu}}(t,\infty;H_{x}^3)}^2
\||\pt_{x_1}|^{\frac{\nu}{2}}
w(\tau)\|_{L_{\tau}^{\frac{6}{3-2\nu}}(t,\infty;W_{x}^{2,\frac{2}{\nu}})}
\nonumber\\
& &+(r+r^2)
\int_t^{+\infty}\tau^{-1-\frac{\delta}{3}}\|w(\tau)\|_{H_{x}^3}^3d\tau
+\int_t^{+\infty}\|w(\tau)\|_{H_{x}^3}^4d\tau
\nonumber\\
& &+(r^3+r^4)
\int_t^{+\infty}\tau^{-2-\frac{2}{3}\delta}\|w(\tau)\|_{H_{x}^3}^{13}d\tau
+(r+r^2)\int_t^{+\infty}\tau^{-1-\frac{\delta}{3}}\|w(\tau)\|_{H_{x}^3}^{14}d\tau.
\nonumber
\end{eqnarray}
Therefore,
\begin{eqnarray}
\label{y1}\\
\lefteqn{\|w(t)\|_{H_{x}^3}^2
+6\int_{\rre^3}w(\Delta w)^2dx+M\|w\|_{L_x^2}^{14}}
\nonumber\\
&\lesssim&
(r^3+r^4)t^{-\alpha-\frac{2}{3}\delta-1}
\left(\sup_{t\in[T,\infty)}t^{\alpha}\|w(t)\|_{H_{x}^3}\right)
\nonumber\\
& &
+(r+r^4)t^{-2\alpha-\frac{\delta}{3}}
\left(\sup_{t\in[T,\infty)}t^{\alpha}\|w(t)\|_{H_{x}^3}\right)^2
\nonumber\\
& &+t^{-3\alpha+\frac{\nu}{3}+\frac12}
\left(\sup_{t\in[T,\infty)}t^{\alpha}\|w(t)\|_{H_{x}^3}\right)^2
\nonumber\\
& &\qquad\qquad\qquad\quad\quad\times
\left(\sup_{t\in[T,\infty)}t^{\alpha}
\||\pt_{x_1}|^{\frac{\nu}{2}}
w(\tau)\|_{L_{\tau}^{\frac{6}{3-2\nu}}
(t,\infty;W_{x}^{2,\frac{2}{\nu}})}\right)
\nonumber\\
& &
+(r+r^2)t^{-3\alpha-\frac{\delta}{3}}
\left(\sup_{t\in[T,\infty)}t^{\alpha}\|w(t)\|_{H_{x}^3}\right)^3
+t^{-4\alpha+1}\left(\sup_{t\in[T,\infty)}t^{\alpha}\|w(t)\|_{H_{x}^3}\right)^4
\nonumber\\
& &
+(r^3+r^4)t^{-13\alpha-\frac{2}{3}\delta-1}
\left(\sup_{t\in[T,\infty)}t^{\alpha}\|w(t)\|_{H_{x}^3}\right)^{13}
\nonumber\\
& &
+(r+r^2)t^{-14\alpha-\frac{\delta}{3}}
\left(\sup_{t\in[T,\infty)}t^{\alpha}\|w(t)\|_{H_{x}^3}\right)^{14}
\nonumber
\end{eqnarray}
for any $t\in[T,\infty)$, 
where the implicit constants are independent of $\lambda$ and $\mu$. 
By the Gagliardo-Nirenberg inequality 
\begin{eqnarray*}
\|\Delta w\|_{L_x^4}
\lesssim\|w\|_{L_x^2}^{\frac{1}{12}}\|w\|_{H_x^3}^{\frac{11}{12}},
\end{eqnarray*}
we have
\begin{eqnarray*}
6\int_{\rre^3}w(\Delta w)^2dx
&\le&6\|w\|_{L_x^2}\|\Delta w\|_{L_x^4}^2\\
&\le&C\|w\|_{L_x^2}^{\frac{7}{6}}\|w\|_{H_x^3}^{\frac{11}{6}}\
\le C'\|w\|_{L_x^2}^{14}+\frac12\|w\|_{H_x^3}^2.
\end{eqnarray*}
Thus, if $M>0$ is sufficiently large, then 
\begin{eqnarray}
\|w(t)\|_{H_{x}^3}^2
\sim
\|w(t)\|_{H_{x}^3}^2
+6\int_{\rre^3}w(\Delta w)^2dx+M\|w\|_{L_x^2}^{14}.
\label{y2}
\end{eqnarray}
Combining (\ref{y1}) with (\ref{y2}), we have (\ref{5.12}). 
\end{proof}

Next we derive the estimates for 
$|\pt_{x_1}|^{\frac{\nu}{2}}w$ 
in $L_{\tau}^{\frac{6}{3-2\nu}}(t,\infty;W_{x}^{2,\frac{2}{\nu}})$. 

\begin{lem}\label{e2}
Let $w$ be a solution to (\ref{regular}). 
Then we have
\begin{eqnarray}
\lefteqn{\sup_{t\in[T,\infty)}t^{\alpha}\||\pt_{x_1}|^{\frac{\nu}{2}}w(\tau)
\|_{L_{\tau}^{\frac{6}{3-2\nu}}(t,\infty;W_{x}^{2,\frac{\nu}{2}})}}
\qquad\qquad\label{5.4}\\
&\lesssim&
(r^3+r^4)T^{\alpha-\frac{2}{3}\delta-1}
+(r+r^2)T^{-\frac{\delta}{3}}\|w\|_{Z_T}
\nonumber\\
& &+T^{-\alpha+\frac{\nu}{3}+\frac12}\|w\|_{Z_T}^2,
\nonumber
\end{eqnarray}
where the implicit constants are independent of 
$\lambda$ and $\mu$.
\end{lem}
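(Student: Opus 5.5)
The approach is to write $w$ through the Duhamel formula from $t=\infty$ and apply the inhomogeneous Strichartz estimate, Lemma \ref{lemL} (\ref{linear3}). Since $w(\tau)\to0$ as $\tau\to\infty$ (indeed $\|w(\tau)\|_{H^3_x}\lesssim\tau^{-\alpha}\|w\|_{Z_T}$), we have
\begin{eqnarray*}
w(\tau)=-\int_\tau^\infty V(\tau-\tau')N(w,u_1,u_2)(\tau')d\tau'
\end{eqnarray*}
(for the regularized equation, with $N$ replaced by the mollified, time-damped version; $\rho_\mu\ast$ and $(1+\lambda t)^{-5}\le1$ are bounded on every space involved, so the constants do not depend on $\lambda,\mu$).

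We apply (\ref{linear3}) with $b=1-\nu$, so that $q=2/\nu$. We pick $a$ with $ab=\nu/2$, i.e.\ $a=\nu/(2(1-\nu))\in(0,1)$ since $\nu<1/2$. Then $b(1+a/3)=1-\nu+\nu/6$, which is less than $1$, so $b<(1+a/3)^{-1}$ holds. The time exponent is $p=2/(1-5\nu/6)$; this is not literally $6/(3-2\nu)$, so I would either adjust the choice of $(a,b)$ or interpolate to match the admissible pair used in $Z_T$. I treat the exponent bookkeeping as routine and write $(p,q)$ for the pair in $Z_T$.

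Commuting $\langle\nabla\rangle^2$ through, this gives
\begin{eqnarray*}
\||\pt_{x_1}|^{\frac\nu2}w\|_{L^p_\tau(t,\infty;W^{2,\frac2\nu}_x)}\lesssim\|\langle\nabla\rangle^2N\|_{L^{p'}_\tau(t,\infty;L^{q'}_x)},
\end{eqnarray*}
where $q'=2/(2-\nu)\in(1,2)$. The key point is that the $\pt_{x_1}$ in $N$ is not absorbed by (\ref{linear3}). So $\langle\nabla\rangle^2\pt_{x_1}$ costs three derivatives, and these must fall on products that we control in $H^3$ or $W^{4,\infty}$.

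We then estimate the three pieces of $N$ separately.

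\emph{Source term} $\pt_{x_1}(2u_1u_2+u_2^2)$. We use Hölder in $x$ with $1/q'=1/2+(1-\nu)/2$ and interpolate $u_1$ between $L^\infty$ and $L^2$. Alternatively, we bound $u_1$ in $W^{3,r}$ via (\ref{linear1}) with $L^1$-type norms of $u_+$ (available in $X_\delta$), together with $\|u_2\|_{H^4}\lesssim r^2\tau^{-1-\delta/3}$ from Lemma \ref{cor:v2}. This gives a pointwise bound $\lesssim(r^3+r^4)\tau^{-2-2\delta/3}$ (modulo interpolation losses, which the surplus $\delta$ decay should absorb). Taking $L^{p'}(t,\infty)$ and multiplying by $t^\alpha$ yields the term $(r^3+r^4)T^{\alpha-\frac23\delta-1}$; this step implicitly requires $\alpha<1+\frac23\delta$, or a bound with $\tau^{-1-\dots}$ decay in $L^{p'}$.

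\emph{Linear term} $2\pt_{x_1}\{(u_1+u_2)w\}$. We bound it by $(\|u_1\|_{W^{3,r_1}}+\|u_2\|_{H^3})\|w\|_{H^3}$ with $1/r_1=1/q'-1/2$, giving $\lesssim(r+r^2)\tau^{-1-\delta/3}\tau^{-\alpha}\|w\|_{Z_T}$. The decay of $u_1$ in $L^{r_1}$ needs an interpolated version of (\ref{linear1}). Integrating in $L^{p'}_\tau$ with $p'<2$ and $p'(1+\delta/3+\alpha)>1$ gives $t^{-\alpha-\delta/3-1+1/p'}$, which is at most $t^{-\alpha-\delta/3}$. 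This produces $(r+r^2)T^{-\delta/3}\|w\|_{Z_T}$.

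\emph{Quadratic term} $\pt_{x_1}(w^2)$. Three derivatives on $w^2$ in $L^{q'}$ are bounded by $\|w\|_{H^3}\|w\|_{W^{1,r_2}}+\|w\|_{H^3}\|w\|_{L^{r_2}}$ type products with $1/r_2=(1-\nu)/2$. Sobolev embedding from $H^3$ (in 3D, $H^3\subset W^{1,r}$ for all $r\ge2$) then gives $\lesssim\|w\|_{H^3}^2\lesssim\tau^{-2\alpha}\|w\|_{Z_T}^2$. Its $L^{p'}(t,\infty)$ norm is $t^{-2\alpha+1/p'}$, and with $1/p'$ equal to $1/2+\nu/3$ this yields $T^{-\alpha+\frac\nu3+\frac12}\|w\|_{Z_T}^2$, matching the statement.

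The main obstacle is the exponent bookkeeping in two places. First, the $\pt_{x_1}$ derivative plus two more derivatives must be placed without losing derivatives beyond $H^3$; here one uses that $w$ in $H^3$ and $u_2$ in $H^4$ suffice. Second, the dual Strichartz space $L^{p'}L^{q'}$ with $q'<2$ forces $u_1$ into $L^{r}$ spaces with $r<\infty$. For this I would first try interpolating (\ref{linear1}) between $b=1$ and $b=0$, using $\||\pt_{x_1}|^{-\delta}u_+\|_{W^{6,1}}$ and the $H^8$ part of $X_\delta$.
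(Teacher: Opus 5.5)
\textbf{There is a genuine gap, in the terms containing $u_1$.}

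Your treatment of $\pt_{x_1}(w^2)$ agrees with the paper. The exponent mismatch you flag is indeed routine: $b=1-\nu$, $a=\nu/(1-\nu)$ in (\ref{linear3}) gives exactly the pair $(6/(3-2\nu),2/\nu)$ with $|\pt_{x_1}|^{\nu}$. Moving $|\pt_{x_1}|^{-\nu/2}$ onto the forcing then produces the paper's $|\pt_{x_1}|^{1-\nu/2}(w^2)$.

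The failure is in $2\pt_{x_1}\{u_1w\}$ and, likewise, in $\pt_{x_1}(u_1u_2)$; the $u_2w$ and $u_2^2$ pieces are fine in your space. You put every piece of $N$ into the single dual space $L^{p'}_\tau L^{q'}_x$ with $q'=2/(2-\nu)<2$. Since $w$ is controlled only in $H^3_x$ and $u_2$ only in $L^2$-based spaces, H\"older then forces $u_1$ into $L^{s}_x$ with $s\le r_1=2/(1-\nu)<4$.

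Your claim $\|u_1(\tau)\|_{W^{3,r_1}}\lesssim r\tau^{-1-\delta/3}$ is false. Interpolating (\ref{linear1}) between $b=1$ and $b=0$, or applying it directly with $b=\nu$, gives only $\tau^{-\nu(1+a/3)}$. No hypothesis on $u_+$ can reach $\tau^{-1}$: stationary phase gives $\|V(t)f\|_{L^r}\sim t^{-3(\frac12-\frac1r)}=t^{-3\nu/2}$ for generic $f$.

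This is not an interpolation loss that the surplus $\delta$ absorbs. With the decay actually available, and $1/p'=\frac12+\frac\nu3$, the linear term contributes
\[
t^{\alpha}\bigl\|\tau^{-\alpha-\nu(1+\frac a3)}\bigr\|_{L^{p'}_\tau(t,\infty)}\|w\|_{Z_T}\sim t^{\frac12-\frac{2\nu}{3}-\frac{a\nu}{3}}\|w\|_{Z_T}.
\]
Since $\nu<\frac12$ and $a<1$, the exponent satisfies $\frac12-\frac{2\nu}{3}-\frac{a\nu}{3}>\frac12-\nu>0$. So this term grows with $T$ rather than producing $(r+r^2)T^{-\delta/3}\|w\|_{Z_T}$, and the bootstrap cannot close. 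The source term has the same defect: its pointwise bound is not $\tau^{-2-2\delta/3}$ but at best $\tau^{-1-\delta/3-3\nu/2}$.

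\textbf{The missing idea.} The inhomogeneous estimate does not require the same pair on the right-hand side for every piece of $N$. The paper keeps $L^{p'}_\tau L^{q'}_x$ only for $\pt_{x_1}(w^2)$. For the terms containing $u_1$ and $u_2$ it uses the energy endpoint: the homogeneous Strichartz estimate for $V$ (the $TT^*$ counterpart of (\ref{linear3})) plus Minkowski in $\tau'$ give the bound
\[
\|(u_1+u_2)w\|_{L^1_\tau(t,\infty;H^3_x)}+\|2u_1u_2+u_2^2\|_{L^1_\tau(t,\infty;H^3_x)}.
\]
Now $u_1$ can be placed in $W^{3,\infty}_x$, where (\ref{v11}) gives $\tau^{-1-\delta/3}$. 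The integrals $\int_t^\infty\tau^{-1-\frac\delta3-\alpha}d\tau\sim t^{-\alpha-\frac\delta3}$ and $\int_t^\infty\tau^{-2-\frac23\delta}d\tau\sim t^{-1-\frac23\delta}$ then yield exactly the terms $(r+r^2)T^{-\delta/3}\|w\|_{Z_T}$ and $(r^3+r^4)T^{\alpha-\frac23\delta-1}$ of the statement.
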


\begin{proof}[Proof of Lemma \ref{e2}.] 
Since $w$ satisfies 
\begin{eqnarray*}
w(t)&=&-
(1+\lambda t)^{-5}\rho_{\mu}\ast\pt_{x_1}
\int_t^{\infty}V(t-\tau)\left[w(\tau)^2\right]d\tau\\
& &-2
(1+\lambda t)^{-5}\rho_{\mu}\ast\pt_{x_1}\int_t^{\infty}V(t-\tau)
\left[(u_1(\tau)+u_2(\tau))w(\tau)\right]d\tau\\
& &-
(1+\lambda t)^{-5}\rho_{\mu}\ast\pt_{x_1}\int_t^{\infty}V(t-\tau)
\left[2u_1(\tau)u_2(\tau)+u_2(\tau)^2\right]d\tau, 
\end{eqnarray*}
applying the Strichartz estimates (Lemma \ref{lemL} (\ref{linear3})), 
we have
\begin{eqnarray}
\label{cF}\\
\lefteqn{\||\pt_{x_1}|^{\frac{\nu}{2}}w(\tau)
\|_{L_{\tau}^{\frac{6}{3-2\nu}}(t,\infty;W_{x}^{2,\frac{2}{\nu}})}}
\nonumber\\
&\lesssim&\|
|\pt_{x_1}|^{1-\frac{\nu}{2}}w(\tau)^2
\|_{L_{\tau}^{\frac{6}{3+2\nu}}(t,\infty;W_{x}^{2,\frac{2}{2-\nu}})}
+\left\|(u_1(\tau)+u_2(\tau))w(\tau)\right\|_{L_{\tau}^1(t,\infty;H_{x}^3)}
\nonumber\\
& &
+\|2u_1(\tau)u_2(\tau)+u_2(\tau)^2\|_{L_{\tau}^1(t,\infty;H_{x}^{3})}.
\nonumber
\end{eqnarray}
By the H\"{o}lder inequality, 
\begin{eqnarray}
\lefteqn{\||\pt_{x_1}|^{1-\frac{\nu}{2}}w(\tau)^2
\|_{L_{\tau}^{\frac{6}{3+2\nu}}(t,\infty;W_{x}^{2,\frac{2}{2-\nu}})}}
\qquad\qquad\label{t1}\\
&\lesssim&\|w(\tau)^2
\|_{L_{\tau}^{\frac{6}{3+2\nu}}(t,\infty;W_{x}^{3,\frac{2}{2-\nu}})}
\nonumber\\
&\lesssim&\|\|w(\tau)\|_{H_{x}^3}^2\|_{L_{\tau}^{\frac{6}{3+2\nu}}(t,\infty)}
\nonumber\\
&\lesssim&
\left(\sup_{t\in[T,\infty)}t^{\alpha}\|w(t)\|_{H_{x}^3}\right)^2
\|\tau^{-2\alpha}\|_{L_{\tau}^{\frac{6}{3+2\nu}}(t,\infty)}
\nonumber\\
&\lesssim&t^{-2\alpha+\frac{\nu}{3}+\frac12}\|w\|_{Z_T}^2.
\nonumber
\end{eqnarray}
By the H\"{o}lder inequality and Lemma \ref{cor:v2}, 
\begin{eqnarray}
\lefteqn{\|(u_1(\tau)+u_2(\tau))w(\tau)\|_{L^1(t,\infty;H_{x}^3)}}
\qquad\qquad
\label{t2}\\
&\lesssim&
\left\|(\|u_1(\tau)\|_{W_{x}^{3,\infty}}
+\|u_2(\tau)\|_{H_x^3})
\|w(\tau)\|_{H_{x}^3}\right\|_{L_{\tau}^1(t,\infty)}
\nonumber\\
&\lesssim&(r+r^2)
\left(\sup_{t\in[T,\infty)}t^{\alpha}\|w(t)\|_{H_{x}^3}\right)
\|\tau^{-1-\alpha-\frac{\delta}{3}}\|_{L_{\tau}^1(t,\infty)}
\nonumber\\
&\lesssim&(r+r^2)t^{-\alpha-\frac{\delta}{3}}
\|w\|_{Z_T},\nonumber\\
\lefteqn{\|2u_1(\tau)u_2(\tau)+u_2(\tau)^2\|_{L^1(t,\infty;H_{x}^3)}}
\qquad\qquad
\label{t3}\\
&\lesssim&
\left\|\|u_1(\tau)\|_{W_{x}^{3,\infty}}
\|u_2(\tau)\|_{H_x^3}
+\|u_2(\tau)\|_{H_x^3}^2\right\|_{L_{\tau}^1(t,\infty)}
\nonumber\\
&\lesssim&(r^3+r^4)\|\tau^{-2-\frac{2}{3}\delta}\|_{L_{\tau}^1(t,\infty)}
\nonumber\\
&\lesssim&(r^3+r^4)t^{-1-\frac{2}{3}\delta}.
\nonumber
\end{eqnarray} 
Substituting (\ref{t1}), (\ref{t2}), (\ref{t3}) into (\ref{cF}), we have 
(\ref{5.4}), where the implicit constants are independent of 
$\lambda$ and $\mu$.
\end{proof}

\begin{proof}[Proof of Theorem \ref{main}.] 
By Lemmas \ref{e1} 
and 
\ref{e2}, 
we have 
\begin{eqnarray*}
\|w\|_{Z_T}&\lesssim&
(r^3+r^4)T^{\alpha-\frac{2}{3}\delta-1}
+(r+r^4)T^{-\frac{\delta}{3}}\|w\|_{Z_T}
+T^{-\alpha+\frac{\nu}{3}+\frac12}\|w\|_{Z_T}^2\\
& &
+(r+r^2)T^{-\alpha-\frac{\delta}{3}}\|w\|_{Z_T}^2
+T^{-2\alpha+1}\|w\|_{Z_T}^3\\
& &
+(r^3+r^4)T^{-11\alpha-\frac{2}{3}\delta-1}\|w\|_{Z_T}^{12}
+(r+r^2)T^{-12\alpha-\frac{\delta}{3}}\|w\|_{Z_T}^{13}.
\end{eqnarray*}
We now choose $\alpha,\nu>0$ so that 
$1/2<\alpha<1$ and $\nu/3+1/2<\alpha$. Then, 
we see that there exists $T>0$ 
which depends on $r$ and is independent of $\lambda$ and $\mu$ 
such that for any $0<\lambda<1$ and $0<\mu<1$,
\begin{eqnarray}
\|w\|_{Z_T}
\le 2r.\label{5.5}
\end{eqnarray}
Combining a priori estimate (\ref{5.5}) with 
the standard compactness argument 
(see \cite[Section 3]{OT} for instance), we find that 
there exists a unique solution 
$u\in C([T,\infty) ; H_x^1(\rre))$ to (\ref{ZK}) 
which satisfies $\|w\|_{Z_T}
=\|u-u_1-u_2\|_{Z_T}\le2r$.  
By conservations of the mass (\ref{mass}) and 
the energy (\ref{energy}), we see $u\in C(\rre ; H^1(\rre))$. 
Furthermore, from the above inequality 
and Lemma \ref{cor:v2}, we see
\begin{eqnarray*}
\|u(t)-V(t)u_+\|_{H_x^3}&\lesssim&
\|w(t)\|_{H_x^3}+\|u_2(t)\|_{H_x^3}\\
&\lesssim&
rt^{-\alpha}+r^2 t^{-1-\frac{\delta}{3}}\\
&\lesssim&(r+r^2) t^{-\alpha}
\end{eqnarray*}
for any $t\ge1$. 
This completes 
the proof of Theorem \ref{main}. \end{proof}


%
%

\appendix

\section{Proof of Lemma \ref{algebra}.}

In this appendix, we prove Lemma \ref{algebra}.

\begin{proof}[Proof of Lemma \ref{algebra}.] By (\ref{r1}), we see 
\begin{eqnarray} 
\pt_{\eta_1}\phi(\xi,\eta)
&=&
3\xi_1^2+\xi_2^2+\xi_3^2
-6\xi_1\eta_1-2\xi_2\eta_2-2\xi_3\eta_3,
\label{r2}\\
\pt_{\eta_2}\phi(\xi,\eta)&=&2\xi_1\xi_2-2\xi_1\eta_2-2\eta_1\xi_2,
\label{r3}\\
\pt_{\eta_3}\phi(\xi,\eta)&=&2\xi_1\xi_3-2\xi_1\eta_3-2\eta_1\xi_3.
\label{r4}
\end{eqnarray} 
Hence, we have
\begin{eqnarray*} 
\phi(\xi,\eta)-\eta\cdot\nabla_{\eta}\phi(\xi,\eta)
=(3\eta_1^2+\eta_2^2+\eta_3^2)\xi_1+2\eta_1(\eta_2\xi_2+\eta_3\xi_3).
\end{eqnarray*} 
Therefore, 
\begin{eqnarray} 
\eta_2\xi_2+\eta_3\xi_3
=
-\frac{3\eta_1^2+\eta_2^2+\eta_3^2}{2\eta_1}\xi_1
+N_1,
\label{r7}
\end{eqnarray} 
where $N_1=N_1(\xi,\eta)$ is given by 
\begin{eqnarray} 
N_1
=\frac{1}{2\eta_1}
\left(\phi(\xi,\eta)-\eta\cdot\nabla_{\eta}\phi(\xi,\eta)\right).
\label{n1}
\end{eqnarray} 
On the other hand, by 
$(\xi_3-\eta_3)\times$(\ref{r3})$-(\xi_2-\eta_2)\times$(\ref{r4}),
\begin{eqnarray*} 
2\eta_1(\eta_3\xi_2-\eta_2\xi_3)
&=&(\xi_3-\eta_3)\pt_{\eta_2}\phi-(\xi_2-\eta_2)\pt_{\eta_3}\phi.
\end{eqnarray*} 
Therefore,
\begin{eqnarray} 
\eta_3\xi_2-\eta_2\xi_3
&=&
\frac{1}{2\eta_1}
\left\{(\xi_3-\eta_3)\pt_{\eta_2}\phi-(\xi_2-\eta_2)\pt_{\eta_3}\phi\right\}.
\label{r8}
\end{eqnarray} 
By (\ref{r7}) and (\ref{r8}),
\begin{eqnarray*}
(\eta_2^2+\eta_3^2)(\xi_2^2+\xi_3^2)
&=&(\eta_2\xi_2+\eta_3\xi_3)^2+(\eta_3\xi_2-\eta_2\xi_3)^2\\
&=&
\frac{(3\eta_1^2+\eta_2^2+\eta_3^2)^2}{4\eta_1^2}\xi_1^2
+(\eta_2^2+\eta_3^2)N_2(\xi,\eta),
\end{eqnarray*}
where $N_2=N_2(\xi,\eta)$ is given by 
\begin{eqnarray}
\label{n2}\\
N_2
&=&-\frac{3\eta_1^2+\eta_2^2+\eta_3^2}{2\eta_1^2(\eta_2^2+\eta_3^2)}
\xi_1
\left(\phi-\eta\cdot\nabla_{\eta}\phi\right)
+
\frac{1}{4\eta_1^2(\eta_2^2+\eta_3^2)}
\left(\phi-\eta\cdot\nabla_{\eta}\phi\right)^2
\nonumber\\
& &
+\frac{1}{4\eta_1^2(\eta_2^2+\eta_3^2)}
\left\{(\xi_3-\eta_3)\pt_{\eta_2}\phi-(\xi_2-\eta_2)\pt_{\eta_3}\phi\right\}^2
\nonumber\\
&=&
\frac{1}{4\eta_1^2(\eta_2^2+\eta_3^2)}
\{-(3\eta_1^2+\eta_2^2+\eta_3^2)\xi_1
+2\eta_1\eta_2\xi_2+2\eta_1\eta_3\xi_3\}\phi
\nonumber\\
& &
+\frac{1}{4\eta_1^2(\eta_2^2+\eta_3^2)}
\{(3\eta_1^2+\eta_2^2+\eta_3^2)\xi_1
-2\eta_1\eta_2\xi_2-2\eta_1\eta_3\xi_3\}
\eta_1\pt_{\eta_1}\phi
\nonumber\\
& &
+\frac{1}{4\eta_1^2(\eta_2^2+\eta_3^2)}
\{2\eta_1\eta_3\xi_2\xi_3
-2\eta_1\eta_2\xi_3^2
+(3\eta_1^2+\eta_2^2+\eta_3^2)\eta_2\xi_1
\nonumber\\
& &
\qquad\qquad\qquad\ 
-2\eta_1(\eta_2^2+\eta_3^2)\xi_2
\}\pt_{\eta_2}\phi
\nonumber\\
& &
+\frac{1}{4\eta_1^2(\eta_2^2+\eta_3^2)}
\{-2\eta_1\eta_3\xi_2^2
+2\eta_1\eta_2\xi_2\xi_3
+(3\eta_1^2+\eta_2^2+\eta_3^2)\eta_3\xi_1
\nonumber\\
& &
\qquad\qquad\qquad\ 
-2\eta_1(\eta_2^2+\eta_3^2)\xi_3
\}\pt_{\eta_3}\phi.
\nonumber
\end{eqnarray}
Hence
\begin{eqnarray}
\xi_2^2+\xi_3^2
=
\frac{(3\eta_1^2+\eta_2^2+\eta_3^2)^2}{4\eta_1^2(\eta_2^2+\eta_3^2)}\xi_1^2
+N_2(\xi,\eta).\label{r9}
\end{eqnarray}
Substituting (\ref{r7}) and 
(\ref{r9}) into (\ref{r2}), we have
\begin{eqnarray*} 
\lefteqn{\pt_{\eta_1}\phi(\xi,\eta)}\\
&=&
(3\xi_1^2-6\xi_1\eta_1)+(\xi_2^2+\xi_3^2)
-2(\eta_2\xi_2+\eta_3\xi_3)
\nonumber\\
&=&
(3\xi_1^2-6\xi_1\eta_1)
+
\frac{(3\eta_1^2+\eta_2^2+\eta_3^2)^2}{4\eta_1^2(\eta_2^2+\eta_3^2)}\xi_1^2
+
\frac{3\eta_1^2+\eta_2^2+\eta_3^2}{\eta_1}\xi_1-2N_1+N_2
\nonumber\\
&=&
\frac{9\eta_1^4+18\eta_1^2(\eta_2^2+\eta_3^2)+(\eta_2^2+\eta_3^2)^2}{4\eta_1^2(\eta_2^2+\eta_3^2)}\xi_1^2
-\frac{3\eta_1^2-\eta_2^2-\eta_3^2}{\eta_1}\xi_1
-2N_1+N_2.\nonumber
\end{eqnarray*} 
Therefore
\begin{eqnarray} 
\lefteqn{
\frac{9\eta_1^4+18\eta_1^2(\eta_2^2+\eta_3^2)+(\eta_2^2+\eta_3^2)^2}{4\eta_1^2(\eta_2^2+\eta_3^2)}\xi_1^2
-\frac{3\eta_1^2-\eta_2^2-\eta_3^2}{\eta_1}\xi_1}
\qquad\qquad\qquad\qquad\qquad\qquad\qquad\qquad
\label{r12}\\
&=&\pt_{\eta_1}\phi+2N_1-N_2.\nonumber
\end{eqnarray} 
On the other hand, 
by $\eta_2\times$(\ref{r3})$+\eta_3\times$(\ref{r4}),
\begin{eqnarray*} 
2(\xi_1-\eta_1)(\eta_2\xi_2+\eta_3\xi_3)
-2\xi_1(\eta_2^2+\eta_3^2)
=\eta_2\pt_{\eta_2}\phi+\eta_3\pt_{\eta_3}\phi.
\end{eqnarray*} 
Hence by (\ref{r7}), we obtain
\begin{eqnarray*} 
2(\xi_1-\eta_1)
\left\{
-\frac{3\eta_1^2+\eta_2^2+\eta_3^2}{2\eta_1}\xi_1+N_1
\right\}
-2\xi_1(\eta_2^2+\eta_3^2)
=\eta_2\pt_{\eta_2}\phi+\eta_3\pt_{\eta_3}\phi.
\end{eqnarray*} 
Therefore, 
\begin{eqnarray}
\lefteqn{\frac{3\eta_1^2+\eta_2^2+\eta_3^2}{\eta_1^2}\xi_1^2
-\frac{3\eta_1^2-\eta_2^2-\eta_3^2}{\eta_1}\xi_1}
\qquad\qquad\qquad
\label{r11}\\
&=&
\frac{1}{\eta_1}
\left\{2(\xi_1-\eta_1)N_1-\eta_2\pt_{\eta_2}\phi-\eta_3\pt_{\eta_3}\phi\right\}.
\nonumber
\end{eqnarray} 
By $\displaystyle{(3\eta_1^2+\eta_2^2+\eta_3^2)/\eta_1^2\times}$
(\ref{r12})$-\{9\eta_1^4+18\eta_1^2(\eta_2^2+\eta_3^2)+(\eta_2^2+\eta_3^2)^2\}
/\{4\eta_1^2(\eta_2^2+\eta_3^2)\}\times$(\ref{r11}), we have
\begin{eqnarray*} 
3\frac{(\eta_1^2+\eta_2^2+\eta_3^2)(3\eta_1^2-\eta_2^2-\eta_3^2)^2}
{4\eta_1^3(\eta_2^2+\eta_3^2)}\xi_1
=N_3,
\end{eqnarray*} 
where $N_3=N_3(\xi,\eta)$ is given by 
\begin{eqnarray}
\label{n3}\\
N_3&=&
\frac{3\eta_1^2+\eta_2^2+\eta_3^2}{\eta_1^2}
(\pt_{\eta_1}\phi+2N_1-N_2)
\nonumber\\
& &-\frac{9\eta_1^4+18\eta_1^2(\eta_2^2+\eta_3^2)
+(\eta_2^2+\eta_3^2)^2}{4\eta_1^3(\eta_2^2+\eta_3^2)}
\{2(\xi_1-\eta_1)N_1-\eta_2\pt_{\eta_2}\phi-\eta_3\pt_{\eta_3}\phi\}.
\nonumber
\end{eqnarray}
Hence we obtain  
\begin{eqnarray}
\xi_1
=
\frac{4\eta_1^3(\eta_2^2+\eta_3^2)}
{3(\eta_1^2+\eta_2^2+\eta_3^2)(3\eta_1^2-\eta_2^2-\eta_3^2)^2}
N_3.
\label{key1}
\end{eqnarray}
Substituting (\ref{n1}), (\ref{n2}), (\ref{n3}) 
into (\ref{key1}), we have (\ref{key2}), 
where
\begin{eqnarray*}
A_{0}(\eta)
&=&\frac{1}{3p(\eta)}
\{9\eta_1^4+30\eta_1^2(\eta_2^2+\eta_3^2)+5(\eta_2^2+\eta_3^2)^2\},\\
A_{1}(\eta)
&=&-\frac{1}{3p(\eta)}
\eta_1\{9\eta_1^4+18\eta_1^2(\eta_2^2+\eta_3^2)+(\eta_2^2+\eta_3^2)^2\},\\
A_{j}(\eta)
&=&-\frac{4}{3p(\eta)}
\eta_i
(\eta_2^2+\eta_3^2)(3\eta_1^2+\eta_2^2+\eta_3^2),\quad j=2,3,\\
B_{0,1}(\eta)
&=&-\frac{4}{p(\eta)}\eta_1(\eta_2^2+\eta_3^2),
\\
B_{0,k}(\eta)
&=&-\frac{2}{3p(\eta)}\eta_k(3\eta_1^2+\eta_2^2+\eta_3^2),
\quad k=2,3,\\
B_{1,1}(\eta)
&=&\frac{4}{p(\eta)}\eta_1^2(\eta_2^2+\eta_3^2),\\
B_{1,k}(\eta)
&=&\frac{2}{3p(\eta)}\eta_1\eta_k(3\eta_1^2+\eta_2^2+\eta_3^2),
\quad k=2,3,\\
B_{j,1}(\eta)
&=&\frac{4}{p(\eta)}\eta_1\eta_j(\eta_2^2+\eta_3^2),\quad j=2,3,\\
B_{j,2}(\eta)
&=&\frac{2}{3p(\eta)}(\eta_2^2+\eta_3^2)(3\eta_1^2+\eta_2^2+\eta_3^2),
\quad j=2,3,\\
C_{j,1}(\eta)
&=&-\frac{2}{3p(\eta)}\eta_{5-j}(3\eta_1^2+\eta_2^2+\eta_3^2),\quad j=2,3,\\
C_{j,2}(\eta)
&=&\frac{2}{3p(\eta)}\eta_j(3\eta_1^2+\eta_2^2+\eta_3^2),\quad j=2,3,
\end{eqnarray*}
with $p(\eta)=(\eta_1^2+\eta_2^2+\eta_3^2)(3\eta_1^2-\eta_2^2-\eta_3^2)^2$. 
Hence we have Lemma \ref{algebra}.
\end{proof}

\subsection*{Acknowledgements} 
J.S was supported by JSPS KAKENHI Grant Numbers
JP25H00597 and JP23K20805.


\begin{thebibliography}{99} 

\bibitem{A}
Anjolras P., 
\textit{Scattering of the 2D modified Zakharov-Kuznetsov equation}, 
preprint. available at arXiv:2506.17179. 



\bibitem{CK} Correia S. and Kinoshita S., 
\textit{Global well-posedness and scattering for 
the 2D modified Zakharov-Kuznetsov equation}, 
preprint, available at arXiv:2507.23397. 

\bibitem{CMPS}
C\^{o}te R., Mu\~{n}oz C., Pilod D., and Simpson G., 
\textit{Asymptotic stability of high-dimensional Zakharov-Kuznetsov solitons}, 
Arch. Ration. Mech. Anal., {\bf 220} (2016) 639--710. 

\bibitem{D} de Bouard A.,  
\textit{Stability and instability of some nonlinear dispersive solitary waves 
in higher dimension}, Proc. Roy. Soc. Edinburgh Sect. A, 
{\bf 126} (1996) 89--112.

\bibitem{F1}
Faminskii A. V., 
\textit{The Cauchy problem for the Zakharov-Kuznetsov equation}. 
Differ. Equations, {\bf 31} (1995), 1002--1012.


\bibitem{FHRY1}
Farah L. G., Holmer J., Roudenko S. and Yang K., 
\textit{Asymptotic stability of solitary waves of 
the 3D quadratic Zakharov-Kuznetsov equation}, 
Amer. J. Math. {\bf 145} (2023), 1695--1775.

\bibitem{FLP}
Farah L.G., Linares F. and Pastor A., 
\textit{A note on the 2D generalized Zakharov-Kuznetsov equation: 
local, global, and scattering results}. 
J. Differential Equations {\bf 253} (2012), 2558--2571.

\bibitem{GMS1} Germain P, Masmoudi N. and Shatah J., 
\textit{Global solutions for 3D quadratic Schr\"odinger equations}. 
Int. Math. Res. Not. {\bf 2009} (2009), 414--432.

\bibitem{GMS2} Germain P, Masmoudi N. and Shatah J., 
\textit{Global solutions for 2D quadratic Schr\"odinger equations}. 
J. Math. Pures Appl. (9) {\bf 97} (2012), 505--543.



\bibitem{GH}
Gr\"unrock, A. and Herr S., 
\textit{The Fourier restriction norm method for the Zakharov-Kuznetsov 
equation}. Discrete Contin. Dyn. Syst. {\bf 34} (2014), 2061--2068.

\bibitem{GNT2} Gustafson S., Nakanishi K., and Tsai T.-P.,
\textit{Scattering theory for the Gross-Pitaevskii equation 
in three dimensions}. Commun. Contemp. Math. 
{\bf 11} (2009), 657--707.

\bibitem{HK1}
Herr S. and Kinoshita S., 
\textit{The Zakharov-Kuznetsov equation in high dimensions: 
small initial data of critical regularity}. 
J. Evol. Equ. {\bf 21} (2021), 2105--2121.

\bibitem{HK2}
Herr S. and Kinoshita S., 
\textit{Subcritical well-posedness results for 
the Zakharov-Kuznetsov equation in dimension three and higher}. 
Ann. Inst. Fourier (Grenoble) {\bf 73} (2023), 1203--1267.



\bibitem{KPV}
Kenig C.E., Ponce G. and Vega L., \textit{
Oscillatory integrals and regularity
of dispersive equations}, Indiana Univ.math J. 
{\bf 40} (1991), 33--69.

\bibitem{K}
Kinoshita S., 
\textit{Global well-posedness for the Cauchy problem 
of the Zakharov-Kuznetsov equation in 2D}. 
Ann. Inst. H. Poincar\'e C Anal. Non Lin\'e aire {\bf 38} (2021), 451--505.


\bibitem{Kl} Klainerman S., 
\textit{The null condition and global existence 
to nonlinear wave equations}, 
Lectures in Appl. Math., {\bf 23}
American Mathematical Society, Providence, RI, 
(1986), 293--326.

\bibitem{KSS}
Klein C., Saut J.-C. and Stoilov N., 
\textit{Numerical study of the transverse stability 
of line solitons of the Zakharov-Kuznetsov equations}, 
Phys. D {\bf 448} (2023), Paper No. 133722, 11 pp.


\bibitem{LSp} 
Laedke E. W.  and Spatschek K.-H. , 
\textit{Nonlinear ion-acoustic waves in weak magnetic fields}. 
Phys. Fluids, {\bf 25} (1982), 985--989. 

\bibitem{LLS}
Lannes D., Linares F. and Saut J.-C., 
\textit{The Cauchy problem for the Euler-Poisson system 
and derivation of the Zakharov-Kuznetsov equation}. 
Studies in phase space analysis with applications to PDEs, 
Progr. Nonlinear Differential Equations Appl., {\bf 84} (2013), 181--213.

\bibitem{LS} 
Linares F. and Saut J.-C., \textit{The Cauchy problem 
for the 3D Zakharov-Kuznetsov equation}. 
Discrete Contin. Dyn. Syst., {\bf 24} (2009), 547--565. 

\bibitem{MP}
Molinet L. and Pilod D., 
\textit{Bilinear Strichartz estimates for the Zakharov-Kuznetsov 
equation and applications}. 
Ann. Inst. H. Poincar\'e C Anal. Non Lin\'eaire {\bf 32} (2015), 347--371.

\bibitem{OT}
Ozawa T.  and Tsutsumi Y., 
\textit{Global existence and asymptotic behavior of 
solutions for the Zakharov equations in three space 
dimensions}, Adv. Math. Sci. Appl. {\bf 3} (1993/94), 
Special Issue, 301--334. 

\bibitem{RS} 
Reed M. and Simon B., 
\textit{Methods of modern mathematical physics. III}
Academic Press, Inc. [Harcourt Brace Jovanovich, Publishers], 
New York, (1980)

\bibitem{RV}
Ribaud F. and Vento S., 
\textit{Well-posedness results for the three-dimensional 
Zakharov-Kuznetsov equation}. 
SIAM J. Math. Anal., {\bf 44} (2012), 2289--2304.

\bibitem{S} Segata J., 
\textit{Existence of wave operators for Zakharov-Kuznetsov 
equation in two space dimensions}, 
Discrete and Continuous Dynamical Systems
{\bf 51} (2026) 230--249.

\bibitem{Y}
Yamazaki Y., 
\textit{Center stable manifolds around line solitary waves of 
the Zakharov-Kuznetsov equation}, 
J. Dynam. Differential Equations {\bf 36} (2024), 871--914.

\bibitem{ZK}
Zakharov V. E. and Kuznetsov E. A., 
\textit{Three-dimensional solitons}. 
Sov. Phys. JETP, {\bf 39} (1974), 285--286.  
\end{thebibliography}
\end{document}